\theoremstyle{plain}
\newtheorem{theo}{Theorem}[section]
\newtheorem{prop}[theo]{Proposition}
\newtheorem{lemma}[theo]{Lemma}
\theoremstyle{definition}
\newtheorem{defi}[theo]{Definition}
\newtheorem{remark}[theo]{Remark}
\newcommand{\R}{\mathbb{R}}
\newcommand{\Z}{\mathbb{Z}}
\newcommand{\N}{\mathbb{N}}
\newcommand{\T}{\mathcal{T}}
\begin{document}

\baselineskip=17pt

\title{Persistence  of fixed points under rigid perturbations of maps}
\author{Salvador Addas-Zanata} \address[Salvador Addas-Zanata]{Departamento
de Matem\'atica Aplicada, Instituto de Matem\'atica e Estat\'istica - Universidade de
S\~ao Paulo} \email[Salvador Addas-Zanata]{sazanata@ime.usp.br}
\author{Pedro A. S. Salom\~ao} \address[Pedro A. S. Salom\~ao]{Departamento
de Matem\'atica, Instituto de Matem\'a\-tica e Estat\'istica - Universidade de
S\~ao Paulo} \email[Pedro A. S. Salom\~ao]{psalomao@ime.usp.br}
\subjclass[2000]{Primary 37C25, 37C50, 37E30}
\keywords{Topological Dynamics, Brouwer theory, Generating functions}

\begin{abstract}
Let $f:S^1\times [0,1]\to S^1\times [0,1]$ be a real-analytic annulus diffeomorphism which is homotopic to the identity map and preserves an area form. Assume that for some lift $\tilde {f}:\mathbb{R}\times [0,1]\rightarrow \mathbb{R}\times [0,1]$
we have ${\rm Fix}(\tilde{f})=\mathbb{R}\times \{0\}$ and that $\tilde{f}$ positively translates points in $\mathbb{R}\times \{1\}$. Let
$\tilde{f}_\epsilon $ be the perturbation of $\tilde{f}$ by the
rigid horizontal translation $(x,y)\mapsto (x+\epsilon ,y)$. We show that for all $\epsilon >0$ sufficiently small we have ${\rm Fix}
(\tilde{f}_\epsilon )=\emptyset $. The proof follows from
Ker\'ekj\'ar\-t\'o's construction of Brouwer lines for orientation
preserving homeomorphisms of the plane with no fixed points. This result
turns out to be sharp with respect to the regularity assumption: there
exists a diffeomorphism $f$ satisfying all the properties above, except that
$f$ is not real-analytic but only smooth, so that the above conclusion is
false. Such a map is constructed via generating functions.
\end{abstract}
\maketitle

\section{Introduction}

Let us denote by ${\rm Diff}^k(\mathbb{D})$ the set of orientation and area
preserving $C^{k\geq 1}$-diffeomorphisms $\hat h:\mathbb{D}\to \mathbb{D}$,
defined in the closed disk $\mathbb{D}:=\{z \in \mathbb{R}^2:|z| \leq 1\}$,
which fixes the origin $0 \in \mathbb{D}$. We denote by ${\rm Diff}_0^k(%
\mathbb{D})\subset {\rm Diff}^k(\mathbb{D})$ the subset of diffeomorphisms
satisfying
$$
{\rm Fix}(\hat h):=\{\hat h(z)=z\}=\{0\} \mbox { and } D \hat h(0)=Id.
$$
Here we are considering the usual area form $dz_1 \wedge dz_2$ on $\mathbb{R}^2$
with coordinates $(z_1,z_2)$.

In this paper we address the following question:

\vspace{0.3cm}

{\bf (Q1)} under what conditions can we find $\hat g\in {\rm Diff}^k(\mathbb{D})$
arbitrarily $C^k$-close to $\hat h$ so that ${\rm Fix}(\hat g)=\{0\}$ and $D \hat g(0)=e^{2
\pi \epsilon i}$, $\epsilon \in \mathbb{R}\setminus \mathbb{Q}$?

\vspace{0.3cm}



Before stating the main results we need some definitions.

\begin{defi}
\begin{enumerate}
\item  Let $A:=S^1\times [0,1]$ be the closed annulus, where $S^1$ is
identified with $\mathbb{R}/\mathbb{Z}$. Let $\tilde A:=\mathbb{R}\times
[0,1]$ be the infinite strip and $p:\tilde A\to A$ be the covering map $%
(x,y)\mapsto (x\mbox{ mod }1,y)$.

\item  Let $p_1:\tilde A\to \mathbb{R}$ and $p_2:\tilde A\to \R$ be the
projections of $\tilde A$ into the first and second factors, respectively.
We also denote by $p_1$ and $p_2$ the respective projections defined on $A$.

\item  Let ${\rm Diff}^k(A)$ be the space of area preserving $C^k$
-diffeo\-morphisms $f:A\to A$, where $k\in \mathbb{N}\cup \{\infty ,\omega \}
$, which are homotopic to the identity map. Let ${\rm Diff}_0^k(A)\subset
{\rm Diff}^k(A)$ denote the diffeomorphisms which satisfy the following
conditions: $f(x,0)=(x,0),\forall x\in S^1$ and if $\tilde f:\tilde A\to
\tilde A$ is the lift of $f$ such that $\tilde f(x,0)=(x,0),\forall x\in
\mathbb{R}$, then ${\rm Fix}(\tilde f)=\mathbb{R}\times \{0\}.$ Moreover, we require that
\begin{equation}
\label{twist} p_1 \circ \tilde f(x,1)>x,\forall x \in \mathbb{R}.
\end{equation}

\item  Let ${\rm Diff}_0^k(\tilde A)$ be the lifts of maps in ${\rm Diff}%
_0^k(A)$ which fix all points in $\mathbb{R}\times \{0\}$.
\end{enumerate}
\end{defi}

Now if $\hat h\in {\rm Diff}_0^k(\mathbb{D})$, we obtain a map $f:=b^{-1} \circ \hat h
\circ b$ induced by $b:A \to \mathbb{D}$,  defined
by
$$
b(x,y):=(\sqrt{y} \cos 2 \pi x,-\sqrt{y} \sin 2 \pi x),
$$
where $(x,y)$ are coordinates in $A$. Notice that $f$ preserves the area
form $dx \wedge dy$. We assume that $f$ extends to a map in  ${\rm Diff}^k(A).$ Clearly, $S^1\times \{0\}$ corresponds to the
blow up of $0\in \mathbb{D}$ and $S^1\times \{1\}$ corresponds to $\partial
\mathbb{D}$. Also, since $\hat h\in {\rm Diff}_0^k(\mathbb{D})$, it follows that either $f$ or $f^{-1}$
admits a lift $\tilde f \in {\rm Diff}_0^k(\tilde A)$. In fact, either $p_1
\circ \tilde f(x,1)>x,\forall x \in \mathbb{R}$ or $p_1 \circ \tilde
f(x,1)<x,\forall x \in \mathbb{R}.$ After possibly interchanging $f$ with $%
f^{-1}$ we may assume without loss of generality that \eqref{twist} is satisfied.

Given $\epsilon \in \mathbb{R}$ we consider the diffeomorphism
\begin{equation}
\label{translation} \tilde f_\epsilon:\tilde A \to \tilde A: (x,y) \mapsto
\tilde f(x,y)+(\epsilon,0).
\end{equation}
The map $\tilde f_\epsilon$ naturally induces a diffeomorphism $f_\epsilon:A
\to A$ given by
\begin{equation}
\label{translation2} f_\epsilon = p \circ \tilde f_\epsilon \circ p^{-1}.
\end{equation}
Notice that the translated map $f_\epsilon$ corresponds to blowing up the
map $\hat h \in {\rm Diff}_0^k(\mathbb{D})$ after compounding it with the rigid
rotation $z \mapsto e^{2\pi \epsilon i} z$.

Our first result is the following theorem.

\begin{theo}
\label{main1} Let $f\in {\rm Diff}_0^\omega (A)$ and $\tilde f\in {\rm Diff}%
_0^\omega (\tilde A)$ be a lift of $f$. Then there
exists $\epsilon _0>0$ such that for all $0<\epsilon <\epsilon _0$, we have  $
{\rm Fix}(\tilde f_\epsilon)=\emptyset$.
\end{theo}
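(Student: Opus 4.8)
The plan is to argue by contradiction, combining a compactness step that confines any alleged fixed point to a thin collar of $\mathbb{R}\times\{0\}$ with an application of Brouwer/Ker\'ekj\'art\'o theory to the restriction of $\tilde f$ to the open strip, where it is fixed-point free. If the conclusion fails, there are $\epsilon_n\downarrow 0$ and points $\tilde z_n\in\tilde A$ fixed by $\tilde f_{\epsilon_n}$; after an integer horizontal translation we may assume $p_1(\tilde z_n)\in[0,1)$, and after passing to a subsequence $\tilde z_n\to z_*$. Since $\tilde f(\tilde z_n)=\tilde z_n-(\epsilon_n,0)\to z_*$, we get $\tilde f(z_*)=z_*$, hence $z_*\in\mathbb{R}\times\{0\}$ and $p_2(\tilde z_n)\to 0$. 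Near $\mathbb{R}\times\{1\}$ the twist condition \eqref{twist} together with compactness of $S^1$ gives $p_1\circ\tilde f_\epsilon(x,y)>x$ for $y$ close to $1$ and all $\epsilon>0$, and on the remaining compact part of the annulus $\tilde f$, hence $\tilde f_\epsilon$ for $\epsilon$ small, has no fixed point; so it suffices to show that $\tilde f_\epsilon$ has no fixed point in $\mathbb{R}\times(0,\delta_0)$ for $\delta_0,\epsilon$ small.

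The second step records the local normal form of $\tilde f$ along $\mathbb{R}\times\{0\}$. Write $\tilde f(x,y)=(x+H(x,y),\,y+V(x,y))$ with $H,V$ real-analytic and $1$-periodic in $x$. From $\tilde f(x,0)=(x,0)$ and $\det D\tilde f\equiv 1$ one reads off $H(x,0)=H_x(x,0)=V(x,0)=V_x(x,0)=V_y(x,0)=0$, and differentiating $\det D\tilde f\equiv 1$ once in $y$ along $\mathbb{R}\times\{0\}$ gives $V_{yy}(x,0)=-H_{xy}(x,0)$; hence, setting $a(x):=H_y(x,0)$, we have $H(x,y)=a(x)y+O(y^2)$ and $V(x,y)=y^2Q(x,y)$ with $Q$ real-analytic and $Q(x,0)=-\tfrac12 a'(x)$, all uniformly in $x$. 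A point $(x,y)$ with $0<y<\delta_0$ is fixed by $\tilde f_\epsilon$ exactly when $V(x,y)=0$ and $H(x,y)=-\epsilon$; thus it lies on $\Sigma:=\{V=0\}\cap\{y>0\}=\{Q=0\}\cap\{y>0\}$ and satisfies $p_1\circ\tilde f<p_1$ there. Here real-analyticity enters: it makes $\Sigma$ a genuine one-dimensional analytic set whose branches emanating from $\mathbb{R}\times\{0\}$ have Puiseux parametrizations along which $\tilde f$ can be controlled — the property that fails for the $C^\infty$ counterexample, where $\{V=0\}$ may be ``fat'' near the boundary.

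Third, the Brouwer input. The open strip $S:=\mathbb{R}\times(0,1)$ is homeomorphic to $\mathbb{R}^2$, and $\tilde f|_S$ is an orientation- and area-preserving homeomorphism with $\mathrm{Fix}(\tilde f|_S)=\emptyset$ — a Brouwer homeomorphism — commuting with the deck transformation $\tau:(x,y)\mapsto(x+1,y)$. If $\tilde z=(x_0,y_0)\in S$, $0<y_0<\delta_0$, is fixed by $\tilde f_\epsilon$, then for $\delta_0,\epsilon$ small the horizontal segment $I$ joining $\tilde z$ to $\tilde f(\tilde z)=\tilde z-(\epsilon,0)$ is a translation arc for $\tilde f$: on the collar $D\tilde f$ is $C^0$-close to the area-preserving shear $\bigl(\begin{smallmatrix}1&a(x)\\0&1\end{smallmatrix}\bigr)$, which fixes horizontal vectors, so $\tilde f(I)$ is a $C^0$-small perturbation of the length-$\epsilon$ horizontal segment issuing to the left from $\tilde f(\tilde z)$, and hence meets $I$ only at $\tilde f(\tilde z)$. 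By Ker\'ekj\'art\'o's construction of Brouwer lines for fixed-point free orientation-preserving plane homeomorphisms, one then obtains from $I$ a properly embedded, $\tilde f$-invariant Brouwer line $\Gamma\subset S$ through $\tilde z$ on which $\tilde f$ acts as a translation, separating $S$ into two $\tilde f$-invariant components; by the normal form the forward orbit of $\tilde z$ stays in the collar for a long time and drifts to the left along $\Gamma$.

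Finally, the contradiction must come from confronting such a line $\Gamma$ — forced to be oriented ``to the left'' near the bottom circle — with \eqref{twist}, which forces $\tilde f$ to move points strictly ``to the right'' near the top circle, together with area-preservation of $\tilde f$ on the finite-area annulus and $\tau$-equivariance of the whole picture: tracking the two ends of $\Gamma$ and the region they cut out (using the normal form to locate where the orbit of $\tilde z$ enters and leaves the collar, and $\tau$-equivariance to compare $\Gamma$ with its translates) one shows that no such invariant Brouwer line can coexist with the positive twist. I expect this last step to be the main obstacle: controlling the ends of $\Gamma$ — ruling out pathological spiralling into the boundary circles or escapes to $x=\pm\infty$ — and turning the clash between the ``leftward'' behaviour at the bottom and the ``rightward'' behaviour at the top into an honest contradiction is exactly where Brouwer/Ker\'ekj\'art\'o theory, area-preservation, the twist condition, and (through the normal form of the second step) real-analyticity all have to be brought together.
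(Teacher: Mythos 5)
Your setup (indirect argument, fixed points $\tilde z_n$ of $\tilde f_{\epsilon_n}$ accumulating on $\mathbb{R}\times\{0\}$, normal form $\tilde f=(x+H,y+V)$ with $V=y^2Q$, analyticity of $\{V=0\}$) matches the paper's starting point, but there are two genuine gaps. First, you misstate what Brouwer/Ker\'ekj\'art\'o theory provides: from a translation arc one does \emph{not} obtain an ``$\tilde f$-invariant Brouwer line on which $\tilde f$ acts as a translation, separating $S$ into two $\tilde f$-invariant components''. A Brouwer line $L$ satisfies only $\tilde f(L)\cap L=\emptyset$ with $\tilde f(L)$ and $\tilde f^{-1}(L)$ on opposite sides; it is not invariant, and the construction (base points, mid-segments, deviation of the path) is exactly what Section~\ref{SecKerek} of the paper is devoted to. Second, and decisively, the contradiction itself is never carried out: your last paragraph explicitly defers ``controlling the ends of $\Gamma$'' and ``turning the clash into an honest contradiction'', which is the actual content of the proof. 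Related to this, you discard the key structural device: the paper does not build a Brouwer line through an interior fixed point, but uses the analytic branch of $\{V=0\}$ through $\bar p$ (Lemma~\ref{lemfix}) \emph{itself} as the lower half $L_2$ of the Brouwer line, so that the line is anchored at the boundary point $\bar p$ where $\tilde f$ moves it strictly to the left while preserving $y$. Without that anchoring, the end behaviour of your line is uncontrolled and none of the concluding arguments apply.

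Concretely, what is missing is the trichotomy that Proposition~\ref{propresume} makes possible for the upper half $L_1$ (built only of horizontal and vertical segments): (a) if $\tilde L$ reaches the region where \eqref{twist} forces rightward motion, it contains a vertical half line hitting $\mathbb{R}\times\{1\}$, and the leftward motion at $\bar p$ versus rightward motion at the top forces $\tilde f(\tilde L)\cap\tilde L\neq\emptyset$; (b) if $\tilde L$ is horizontally unbounded, eventual periodicity (Remark~\ref{remdev}, which needs the finiteness condition \eqref{finitude} for $V_0\cap\tilde f^{-1}(V_0)$ --- itself another use of real-analyticity, absent from your sketch) yields a homotopically nontrivial closed curve in a finite-area quotient annulus disjoint from its image, contradicting area preservation; (c) if $\tilde L$ is bounded and accumulates only on $\mathbb{R}\times\{0\}$, one quotients by a large translation, collapses the boundary circles to obtain a sphere map, and uses Brouwer's lemma to get a closed curve through the pole whose complementary disk is mapped properly into itself, again contradicting finite area. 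These three case analyses, not the existence of a translation arc (your argument for which is essentially fine, and parallels the paper's estimate via $\partial_x(p_1\circ\tilde f)\to 1$), are where twist, area preservation and analyticity are combined; as it stands your proposal stops exactly where the proof begins.
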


\begin{remark}
The hypothesis $\tilde f(x,0)=(x,0),\forall x\in \mathbb{R}$ can be weakened
to $p_1\circ \tilde f(x,0)\geq x,\forall x\in \mathbb{R},$ as is easily seen
from the proof.
\end{remark}

\begin{remark}
From the classical Poincar\'e-Birkhoff theorem $\tilde f_\epsilon $ has
fixed points in ${\rm interior}(\tilde A)$ for all $\epsilon <0$
sufficiently small.
\end{remark}

Our next result proves sharpness of the real-analyticity assumption in
Theorem \ref{main1}, i.e, this phenomenon does not occur assuming only
smoothness.

\begin{theo}
\label{main2} There exist $f\in {\rm Diff}_0^\infty (A)$ and
 a sequence of positive real numbers $\epsilon _n\to 0^{+}$ as $
n\to \infty $ such that ${\rm Fix} (\tilde f_{\epsilon _n})\neq \emptyset$, where $
\tilde f\in {\rm Diff}_0^\infty (\tilde A)$ is the special lift of $f$ and $\tilde f_{\epsilon_n}$ is defined as in \eqref{translation}, for all $n\in \N$.
\end{theo}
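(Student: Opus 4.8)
The plan is to build $f$ directly from a generating function, so that the translated map $\tilde f_{\epsilon}$ has an explicitly controllable fixed-point set. Recall that an exact area-preserving twist map of the strip can be written in the form $\tilde f(x,y)=(X,Y)$ with $y=-\partial_1 h(x,X)$, $Y=\partial_2 h(x,X)$, where $h(x,X)$ is a generating function; the twist condition \eqref{twist} corresponds to $\partial_{12}h<0$, and the composition with the rigid translation $(x,y)\mapsto(x+\epsilon,y)$ amounts to replacing $h(x,X)$ by $h(x,X)-\epsilon X$ (up to a constant, and up to which variable one translates). A point $(x,y)$ is fixed by $\tilde f_\epsilon$ iff $x$ is a critical point of the function $x\mapsto h(x,x)-\epsilon x$ and $y$ is then determined. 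Thus I want to choose $h$ so that: (i) $h(x,x)$ is constant on a neighborhood of the boundary circle $\{y=0\}$ and strictly convex-like behavior forces $\mathrm{Fix}(\tilde f)=\mathbb{R}\times\{0\}$ for $\epsilon=0$; (ii) for a sequence $\epsilon_n\to 0^+$ the perturbed function $h(x,x)-\epsilon_n x$ has a critical point in the interior, producing a fixed point of $\tilde f_{\epsilon_n}$; yet (iii) $h$ is only $C^\infty$ and not real-analytic — indeed by Theorem \ref{main1} it \emph{cannot} be real-analytic.

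The key device is to choose the diagonal restriction $\varphi(x):=p_2\circ\tilde f(x,0)-\text{(something)}$ — more precisely the function whose critical points detect fixed points — to be a smooth function that is identically zero on a sequence of shrinking intervals but has nonzero "rotation number drift" in between, so that arbitrarily small upward translations $\epsilon_n$ create tangencies. Concretely I would first work on the annulus side: let $g:A\to A$ be a smooth area-preserving map, homotopic to the identity, fixing $S^1\times\{0\}$ pointwise, satisfying the twist condition \eqref{twist}, and such that the "rotation function" $\rho(x):=p_1\circ\tilde f(x,0)-x$ of the lift vanishes identically but with infinitely many flat tangencies of the graph of $x\mapsto p_2\circ \tilde f_\epsilon$ to the line $\{y=0\}$ accumulating at $\epsilon=0$. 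Since real-analytic functions cannot be flat at a point without being identically zero, such behavior is genuinely smooth and not analytic — which is exactly the contrast with Theorem \ref{main1}. Then I would verify the four defining conditions of ${\rm Diff}_0^\infty(A)$ hold, in particular $\mathrm{Fix}(\tilde f)=\mathbb{R}\times\{0\}$ for the unperturbed map (the $\epsilon=0$ generating function should have the diagonal as its unique critical manifold).

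The construction itself runs as follows. Pick a sequence $x_n\to 0$ in $(0,1)$ and a smooth bump-type function giving a generating function $h_\epsilon(x,X)=h_0(x,X)-\epsilon X$ with $h_0$ chosen so that the one-variable function $u(x):=h_0(x,x)$ is $C^\infty$, is monotone-ish globally so that $u'(x)=0$ only at $x=0$ (giving $\mathrm{Fix}(\tilde f)=\mathbb{R}\times\{0\}$), but has an inflection-type behavior at each $x_n$ with $u'(x_n)=\epsilon_n$ for a prescribed sequence $\epsilon_n\searrow 0$; then $u'(x)-\epsilon_n=0$ has the solution $x=x_n$, i.e. $\tilde f_{\epsilon_n}$ has a fixed point over $x_n$. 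One then lets $f$ be the annulus map induced by $\tilde f$; smoothness and area preservation come for free from the generating-function formalism, the twist condition is arranged by keeping $\partial_{12}h_0<0$, and the boundary normalizations $f(x,0)=(x,0)$ and $f$-extends-to-$\mathrm{Diff}^\infty(A)$ are arranged by taking $h_0$ to agree with an integrable model near $y=0$ and near $y=1$.

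The main obstacle I anticipate is \emph{simultaneously} enforcing all of: area preservation, global twist \eqref{twist}, $\mathrm{Fix}(\tilde f)=\mathbb{R}\times\{0\}$ exactly (no spurious interior fixed points of the \emph{unperturbed} map, and no fixed points drifting in for $\epsilon$ between consecutive $\epsilon_n$ is \emph{not} required, only existence along a sequence), and that $\tilde f$ genuinely maps the \emph{closed} strip to itself as an honest diffeomorphism of $A$ — the generating function must be globally defined and the implicit relations $y=-\partial_1 h$, $Y=\partial_2 h$ must define a global diffeomorphism, which constrains $h$ near the boundary and forces a careful gluing of the "interesting" part of $h$ (near the points $x_n$, and at intermediate heights $y$ bounded away from $0$) to the integrable ends. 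I would handle this by localizing the nontrivial part of $h_0$ in a compact region $S^1\times[\delta,1-\delta]$ and using a partition-of-unity gluing, checking the twist inequality is preserved throughout; the flatness that defeats analyticity is then concentrated in the $x$-dependence of $u$ at the accumulating points $x_n$, which is compatible with $C^\infty$ but not $C^\omega$.
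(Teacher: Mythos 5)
There is a fatal structural problem with your plan, independent of the technical gluing issues you anticipate. You propose to realize $\tilde f$ by a classical twist generating function $h(x,X)$ with $y=-\partial_1 h(x,X)$, $Y=\partial_2 h(x,X)$ and to ``arrange the twist by keeping $\partial_{12}h_0<0$.'' But if $\partial_{12}h<0$ holds (as it must, at least wherever you want $(x,X)$ to be legitimate coordinates), then along each vertical fiber $X$ is strictly increasing in $y$; since the normalization $\tilde f(x,0)=(x,0)$ gives $X(x,0)=x$, every point with $y>0$ satisfies $p_1\circ\tilde f(x,y)>x$, and hence $p_1\circ\tilde f_\epsilon(x,y)>x$ for every $(x,y)$ and every $\epsilon>0$. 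Thus a monotone twist map satisfying the boundary conditions has ${\rm Fix}(\tilde f_\epsilon)=\emptyset$ for \emph{all} $\epsilon>0$ --- exactly the opposite of what Theorem \ref{main2} demands; no choice of $h_0$ can rescue this. (If instead you let the twist degenerate or change sign near $y=0$, the position-position generating function no longer defines the map there, so the formalism you rely on collapses precisely where the example has to be interesting.) There are also two internal inconsistencies: the condition $\tilde f(x,0)=(x,0)$ forces $\partial_1h(x,x)=\partial_2h(x,x)=0$, so $u(x)=h_0(x,x)$ is necessarily \emph{constant}, contradicting your plan to prescribe $u'(x_n)=\epsilon_n\neq0$; and composing with the rigid horizontal translation corresponds to replacing $h(x,X)$ by $h(x,X-\epsilon)$, not by $h(x,X)-\epsilon X$ (the latter perturbs $Y$, i.e.\ is a vertical shear), so fixed points of $\tilde f_\epsilon$ are critical points of $x\mapsto h(x,x-\epsilon)$, not of $x\mapsto h(x,x)-\epsilon x$.

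The mechanism your ansatz excludes is the one the paper actually uses: the example \emph{cannot} be a twist map near $y=0$; its horizontal displacement must change sign along vertical fibers arbitrarily close to the fixed boundary. The paper works with a mixed-variable generating function $G(X,y)=Xy-g(X,y)$ as in \eqref{generating} (which needs no twist condition, only $1+g_{yX}\neq0$, obtained from smallness of the derivatives of $g$), and takes $g=h\circ p_2\circ\psi$, where $h(t)=e^{-1/t}$ is flat at $0$ and $\psi$ rotates by $\pi$ a sequence of disjoint balls centered at $p_k=(0,3/2^{k+2})$ accumulating at the origin. Then $\nabla g=(0,s_k)$ with $s_k>0$ on the circles $y=2^{-k}$ (points move right there, which after rescaling yields \eqref{twist} and helps force ${\rm Fix}(\tilde f)=\mathbb{R}\times\{0\}$ via ${\rm Crit}(g)=\mathbb{R}\times\{0\}$), while $\nabla g(p_k)=(0,-m_k)$ with $m_k=h'(3/2^{k+2})\to0^+$, so that $(m_k,3/2^{k+2})$ is a fixed point of $\tilde f+(m_k,0)$; the sequence $\epsilon_k=m_k$ is the one in the theorem, and the only delicate point is the smoothness and flatness of $g$ at the origin (Proposition \ref{propsmooth}). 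If you want to salvage your outline, you must abandon the global twist/position-position formalism and build in this sign-oscillating displacement accumulating at a single boundary point, which is where the non-analyticity is concentrated.
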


The proof of Theorem \ref{main1} strongly relies on a construction due to B.
de Ker\'ekj\'ar\-t\'o \cite{kerek1} of Brouwer lines for orientation
preserving homeomorphisms of the plane which have no fixed point. Here, the
hypothesis of real-analyticity of $f$ plays an important role. We argue indirectly assuming the existence of a sequence $\epsilon_n \to 0^+$ such that $\tilde f_{\epsilon_n}$ admits a fixed point $z_n$. We can assume that $z_n$ converges to a point $\bar z$ at the lower boundary component of $\tilde A$. The real-analyticity hypothesis then allows one to conclude the existence of a small real analytic curve $\gamma_0$ starting at $\bar z$, which is a graph in the vertical direction, so that $\tilde f$ moves its point horizontally to the left. Since $\tilde f$ has no fixed point in ${\rm interior}(\tilde A)$, the curve $\gamma_0$ is then prolonged to a Brouwer line $L \subset \tilde A$, following Ker\'ekj\'ar\-t\'o's construction. We analyse all possibilities for the behaviour of $L$ and each of them yields a contradiction. Here, we strongly use the fact that $\tilde f$ moves points in the upper boundary of $\tilde A$ to the right.

The smooth map $f$ in Theorem \ref{main2} is obtained from a special generating function on $\tilde A$. More precisely, first we define a diffeomorphism $\psi:\tilde A \to \tilde A$ supported in the sequence of balls $B_k\subset \tilde A$ centered at $(0,3/2^{k+2})$ and radius $1/2^{k+3}$, converging to the origin. Using the function $h(t)=e^{-1/t}$, which extends smoothly at $t=0$ as a flat point, we define the generating function by $g(p)=h \circ p_2 \circ  \psi(p)$, where $p_2$ is the projection in the vertical direction. The diffeomorphism associated to $g$, which is a priori defined only in a small neighbourhood of the origin, is then suitably re-scaled in order to find the diffeomorphism $f$ of the annulus satisfying all the requirements.

As one can see, $f$ satisfies all hypotheses of Theorem \ref{main1} except that it is not real-analytic at a unique point in the lower boundary. This follows from the flatness of $h$ at $t=0$ and therefore the example in Theorem \ref{main2} shows the sharpness of the regularity assumption in Theorem \ref{main1}.

\section{Ker\'ekj\'art\'o's construction of Brouwer lines}

\label{SecKerek}

In this section we denote by $h:\mathbb{R}^2 \to \mathbb{R}^2$ an
orientation preserving homeomorphism of the plane satisfying
\begin{equation}
\label{fixa} {\rm Fix}(h)= \emptyset.
\end{equation}
The following periodicity in $x$ is assumed
\begin{equation}
\label{perix} h(x+1,y)=h(x,y)+(1,0),\forall(x,y)\in \mathbb{R}^2.
\end{equation}

\begin{defi}
\begin{enumerate}
\item[(a)]  We call $\alpha \subset \mathbb{R}^2$ a simple arc if $\alpha $
is the image of a topological embedding $\psi :[0,1]\to \mathbb{R}^2$. We
may consider the parametrization $\psi $ of the arc $\alpha $, which will
also be denoted by $\alpha $. We also identify all the parameterizations of $%
\alpha $ which are induced by orientation preserving homeomorphisms of the
respective domains. The internal points of the simple arc $\alpha $ are
defined by $\alpha \setminus \{\alpha (0),\alpha (1)\}$ and denoted $\dot
\alpha $. Given distinct points $B_1,B_2,\ldots ,$ in $\R^2$, we denote by $B1B2\ldots $
the polygonal arc connecting them by straight segments of lines following
that order. We may also denote by $AB$ a simple arc with endpoints $A \neq B$,
which is not necessarily a line segment.

\item[(b)]  Given any two simple arcs $\eta _0$ and $\eta _1$ with a unique
common end point, we denote by $\eta _0\cup \eta _1$ the simple arc obtained
by concatenating $\eta _0$ and $\eta _1$ in the usual way and respecting the
orientation from $\eta _0$ to $\eta _1$.

\item[(c)]  We say that the simple arc $\alpha \subset \mathbb{R}^2$ is a
translation arc if $\alpha (0)=z$, $\alpha (1)=h(z)\neq z$ and
$$
\alpha \cap h(\alpha )=\{h(z)\}.
$$

\item[(d)]  Let $\alpha $ be a simple arc with end points $b$ and $c$. We
say that $\alpha $ abuts on its inverse or direct image, respectively, if $b\not \in h^{-1}(\alpha )\cup
h(\alpha )=\emptyset $ and one of the following conditions holds:

\begin{enumerate}
\item[(i)]  $\dot \alpha \cap h^{-1}(\alpha )=\emptyset $ and $c\in h^{-1}(\alpha
)$.

\item[(ii)]  $\dot \alpha \cap h(\alpha )=\emptyset $ and $c\in h(\alpha )$.

\end{enumerate}

\item[(e)]  We say that $L\subset \mathbb{R}^2$ is a Brouwer line for $h$ if
$L$ is the image of a proper topological embedding $\psi :\mathbb{R}\to
\mathbb{R}^2$ so that $h(L)$ and $h^{-1}(L)$ lie in different components of $
\mathbb{R}^2\setminus L$.
\end{enumerate}
\end{defi}



Let $AB$ be a translation arc with end points $A$ and $B:=h(A)$. Let $C=h(B)$
and denote by $BC$ the simple arc given by $h(AB)$. Let us assume without
loss of generality that the vertical line passing through $B$ intersects the
arcs $AB$ and $BC$ only at $B$. Otherwise, we can perform a topological change of coordinates in order to achieve this property.

We will construct two half lines $L_1$ and $L_2$ issuing from $B$, with $L_1$
starting upwards and $L_2$ starting downwards,  so that $L=L_1 \cup L_2$
is a Brouwer line for $h$. $L_1$ and $L_2$ will be referred to as half Brouwer lines since both are topological embeddings of $[0,\infty)$ into $\R^2$ and $h(L_i) \cap L_i = \emptyset, i=1,2.$

Let us start with $L_1$. Consider the vertical
arc $\gamma_1$ starting upwards from $B$ which is defined by $%
\gamma_1(t)=B+(0,t),$ where $t\in[0, t^*]$ ($t^*$ to be defined below), or $t\in
[0,\infty)$. One of the following conditions is met:

\begin{enumerate}
\item[(i)]  There exists $t^{*}>0$ such that $\gamma _1$ abuts on its
inverse image and $P:=\gamma _1(t^{*})$ is such that $h(P)=:P^{\prime }$ is
an internal point of $\gamma _1$.

\item[(ii)]  There exists $t^{*}>0$ such that $\gamma _1$ abuts on its image
and $P:=\gamma _1(t^{*})$ is an internal point of $h(\gamma _1)$. In this
case we set $P^{\prime }:=h^{-1}(P)$ which is an internal point of $\gamma _1
$.

\item[(iii)]  $\gamma _1$ is defined for all $t\geq 0$, $(h^{-1}(\dot \gamma
_1)\cup \dot \gamma _1\cup h(\dot \gamma _1))\cap (AB\cup BC)=\emptyset $
and $h(\gamma _1)\cap \gamma _1=\emptyset .$
\end{enumerate}

In case (iii) our construction of $L_1$ ends and we define $L_1=\gamma_1$.
Otherwise in cases (i) and (ii), we define $PP^{\prime}$ to be the simple
arc in $\gamma_1$ from $P^{\prime}$ to $P$. Notice that by construction $%
PP^{\prime}$ is a translation arc. Ker\'ekj\'art\'o proves the following
theorem.

\begin{theo}[See \cite{kerek1}, Theorems II, III and IV]
\label{teofreeside} In cases (i) and (ii) above, we have
$$
h(\gamma _1)\cap AB=h^{-1}(\gamma _1)\cap BC=h(\gamma _1)\cap h^{-1}(\gamma
_1)=\emptyset .
$$

Moreover, in case (i) there exists a sub-arc $\nu _1$ of $h^{-1}(\gamma _1)$
from $A$ to $P$ such that $\nu _1\cup PP^{\prime }\cup h(\gamma _1)\cup
BC\cup AB$ is a simple closed curve which bounds an open domain $U_1\subset
\mathbb{R}^2$. In case (ii) there exists a sub-arc $\nu _1$ of $h(\gamma _1)$
from $C$ to $P$ such that $\nu _1\cup PP^{\prime }\cup h^{-1}(\gamma _1)\cup
AB\cup BC$ is a simple closed curve which bounds an open domain $U_1\subset
\mathbb{R}^2$.
\end{theo}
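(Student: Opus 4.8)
The plan is to reconstruct the argument of \cite{kerek1} (Theorems II--IV), whose engine is Brouwer's theory of translation arcs. I will treat case (i); case (ii) follows verbatim after interchanging $h$ with $h^{-1}$, which swaps ``direct image'' with ``inverse image'' and the roles of $AB$ and $BC$. Throughout, ``$\gamma_1$'' denotes the truncated arc $\gamma_1|_{[0,t^*]}$, with endpoints $B=\gamma_1(0)$ and $P=\gamma_1(t^*)$, and $PP'$ denotes the sub-arc of $\gamma_1$ joining $P$ to $P':=h(P)$.

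First I would put the hypotheses into a workable normal form. By the definition of ``$\gamma_1$ abuts on its inverse image'' we have $B\notin h(\gamma_1)\cup h^{-1}(\gamma_1)$, $\dot\gamma_1\cap h^{-1}(\gamma_1)=\emptyset$ and $P\in h^{-1}(\gamma_1)$; applying $h$ these read $C=h(B)\notin\gamma_1$, $h(\dot\gamma_1)\cap\gamma_1=\emptyset$ and $P'\in\gamma_1$. The normalization that the vertical line through $B$ meets $AB\cup BC$ only at $B$ gives, since $\gamma_1$ lies on that line, $\gamma_1\cap(AB\cup BC)=\{B\}$. Because the endpoints of $\gamma_1$ are $B$ and $P$, the above forces $\gamma_1\cap h^{-1}(\gamma_1)=\{P\}$ and $\gamma_1\cap h(\gamma_1)=\{P'\}$; in particular $PP'\cap h(PP')\subseteq\gamma_1\cap h(\gamma_1)=\{P'\}$, so $PP'$ is a translation arc, confirming the assertion made in the construction. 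I would also record that $AB$ is a translation arc with $h(AB)=BC$, so Brouwer's lemma on translation arcs makes $h^{-1}(AB)\cup AB\cup BC$ a simple arc meeting itself only in the forced points $A$ and $B$.

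The heart of the proof is the three identities $h(\gamma_1)\cap AB=h^{-1}(\gamma_1)\cap BC=h(\gamma_1)\cap h^{-1}(\gamma_1)=\emptyset$. I would prove the first by contradiction; the second is its image under $h\leftrightarrow h^{-1}$, and the third is entirely analogous. Applying $h^{-1}$, the first is equivalent to $\gamma_1\cap h^{-1}(AB)=\emptyset$. If this failed, let $t_1\in(0,t^*]$ be minimal with $q_0:=\gamma_1(t_1)\in h^{-1}(AB)$; since $\gamma_1$ meets $AB\cup BC$ only at $B$, the arc $\gamma_1|_{[0,t_1]}$ meets the simple arc $\Omega:=h^{-1}(AB)\cup AB\cup BC$ exactly at its own endpoints $B$ and $q_0$, so joining $\gamma_1|_{[0,t_1]}$ to the sub-arc of $\Omega$ from $B$ to $q_0$ produces a Jordan curve $J$ bounding an open disk $D$. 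The boundary of $D$ is built from pieces of the translation arcs $h^{-1}(AB)$, $AB$ --- along which $h$ shifts points ``one step forward'' --- together with a piece of $\gamma_1$; the standard Brouwer-index computation around $\partial D$, equivalently the consequence of Brouwer's plane translation theorem that an orientation preserving fixed point free homeomorphism of the plane admits no Jordan curve assembled from finitely many translation-arc segments, then produces a fixed point of $h$ in $\overline{D}$, contradicting \eqref{fixa}. The third identity is handled the same way, using that $h(\gamma_1)$ and $h^{-1}(\gamma_1)$ contain the translation arcs $h(PP')$ and $h^{-1}(PP')$. I expect this step, rather than the bookkeeping that follows, to be the main obstacle: it is where fixed point freeness and Brouwer's theory are genuinely used.

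Finally I would assemble the simple closed curve. In case (i) it is the concatenation of the five arcs $AB$ ($A\to B$), $BC$ ($B\to C$), $h(\gamma_1)$ ($C=h(B)\to P'=h(P)$), $PP'$ ($P'\to P$) and the sub-arc $\nu_1\subseteq h^{-1}(\gamma_1)$ from $A=h^{-1}(B)$ to $P$, the last of which exists because $P\in h^{-1}(\gamma_1)$. Consecutive pairs meet only in the common vertex: $AB\cap BC=\{B\}$ as $AB$ is a translation arc; $BC\cap h(\gamma_1)=h(AB\cap\gamma_1)=\{C\}$ because $\gamma_1\cap AB=\{B\}$; $h(\gamma_1)\cap PP'\subseteq h(\gamma_1)\cap\gamma_1=\{P'\}$ and $PP'\cap\nu_1\subseteq\gamma_1\cap h^{-1}(\gamma_1)=\{P\}$ by the normal form; and $\nu_1\cap AB=\{A\}$ since $h(\nu_1)\cap BC\subseteq\gamma_1\cap BC=\{B\}$. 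Non-consecutive pairs are empty: $AB\cap h(\gamma_1)$, $BC\cap\nu_1$ and $h(\gamma_1)\cap\nu_1$ vanish by the three identities (using $\nu_1\subseteq h^{-1}(\gamma_1)$), while $AB\cap PP'=BC\cap PP'=\emptyset$ because $PP'\subseteq\gamma_1$ lies on the vertical line through $B$, which meets $AB\cup BC$ only at $B\notin PP'$. Hence $\nu_1\cup PP'\cup h(\gamma_1)\cup BC\cup AB$ is a simple closed curve, and the Jordan Curve Theorem supplies the bounded complementary component $U_1$. Interchanging $h$ with $h^{-1}$ and $AB$ with $BC$ yields the statement in case (ii).
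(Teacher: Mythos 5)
The paper itself does not prove this statement: it is imported verbatim from Ker\'ekj\'art\'o (Theorems II, III, IV of \cite{kerek1}), so your reconstruction has to stand on its own. Your normal form for the abutting hypotheses, the use of Brouwer's translation-arc lemma to see that $h^{-1}(AB)\cup AB\cup BC$ is a simple arc, and the final bookkeeping that assembles $\nu_1\cup PP'\cup h(\gamma_1)\cup BC\cup AB$ into a Jordan curve \emph{once the three disjointness identities are granted} are all correct. But those three identities are precisely the content of Ker\'ekj\'art\'o's theorems, and your argument for them has a genuine gap. For the first identity you use only the following data: $\gamma_1|_{[0,t_1]}$ meets $\Omega:=h^{-1}(AB)\cup AB\cup BC$ exactly at its endpoints $B$ and $q_0\in h^{-1}(AB)$, and you then assert that the resulting Jordan curve forces a fixed point of $h$ in the bounded disk, invoking ``the standard Brouwer-index computation'' or the principle that a fixed point free orientation preserving homeomorphism ``admits no Jordan curve assembled from finitely many translation-arc segments.'' That principle is false, and the data you use cannot force a fixed point. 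Concretely, let $h(x,y)=(x+1,y)$, $A=(0,0)$, $B=(1,0)$, $AB$ the straight segment, so $h^{-1}(AB)=[(-1,0),(0,0)]$ and $BC=[(1,0),(2,0)]$; let $\sigma$ be an arc from $B$ into the open upper half-plane coming back down to $q_0=(-\tfrac12,0)$. Then $\sigma$ meets $\Omega$ exactly at $B$ and $q_0$, the Jordan curve $\sigma\cup q_0B$ (the sub-arc of $\Omega$ through $A$) bounds a disk, and $h$ has no fixed point anywhere. So the index of $h$ along such a curve can perfectly well be zero; whatever makes it nonzero in Ker\'ekj\'art\'o's situation must use the abutting hypotheses --- the disjointness of $\dot\gamma_1$ from $h^{-1}(\gamma_1)$ (resp.\ $h(\gamma_1)$), the position of $P$ and $P'$, and hence the images $h^{\pm1}(\gamma_1)$ themselves as part of the curve along which the index is computed --- and these never enter your argument for the identities.

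The same objection applies to your treatment of the second and third identities (``its image under $h\leftrightarrow h^{-1}$'', ``handled the same way''): the symmetry remark is fine as bookkeeping, but there is no index computation to transport. In short, the proposal correctly reduces the theorem to the three disjointness statements and correctly derives the Jordan curve from them, but the heart of the proof --- showing that in cases (i) and (ii) the truncated vertical arc cannot have met $h^{-1}(AB)$, that $h^{-1}(\gamma_1)$ cannot have met $BC$, and that $h(\gamma_1)$ and $h^{-1}(\gamma_1)$ are disjoint, all \emph{because} $\gamma_1$ abuts for the first time at $t^*$ --- is exactly the part you leave to an appeal that, as stated, is not a theorem. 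To complete the argument you would need to carry out the index estimate along a curve built from $\gamma_1$, $h(\gamma_1)$ (or $h^{-1}(\gamma_1)$), and pieces of $AB$, $BC$, using the abutting structure, which is what \cite{kerek1} actually does.
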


\begin{defi}
The free side of $PP^{\prime }$ is defined to be the side of $PP^{\prime }$
towards outside $U_1$ as in Theorem \ref{teofreeside}. See Figure $1$.
\end{defi}

The free side of the translation arc $PP^{\prime}\subset \gamma_1$ only
depends on which side $\gamma_1$ lies with respect to the oriented arc $AB
\cup BC$ and on how $\gamma_1$ abuts its image according to cases (i) or
(ii). This dependence strongly follows from the assumption that $h$ has no
fixed points and is exemplified in Figure $1$.

\begin{figure}\label{figu1}
\begin{center}
\includegraphics[width=80mm]{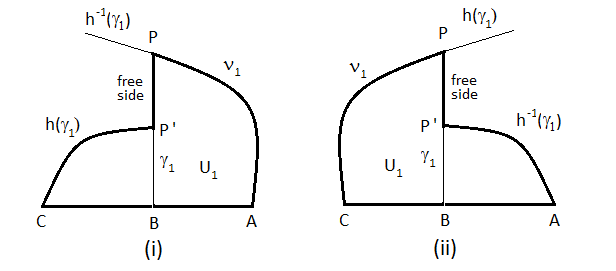}
\end{center}
\caption{In this picture, $\gamma_1$ abuts on its inverse and direct image as in cases (i) and (ii), respectively.}
\end{figure}

Now we need a couple of definitions in order to start the construction of
$L_1$.

\begin{defi}
\begin{enumerate}
\item  Let $R_n=[-n,n]\times [-n,n],\forall n\in \mathbb{N}^{*}$ and
$$
\epsilon _n=\inf \{|h(x)-x|,|h^{-1}(x)-x|:x\in R_n\}>0.
$$
Define $\eta _n>0$ to be the largest number $t\in (0,\epsilon _n/2]$ so that
$|h(x)-h(y)|\leq \epsilon _n/2$ and $|h^{-1}(x)-h^{-1}(y)|\leq \epsilon _n/2,
$ whenever $x,y\in R_n$ and $|x-y|\leq t$.

\item  Let $n\in \mathbb{N}^{*}$ and assume $PP^{\prime }\subset R_n$. By
mid-segment of $PP^{\prime }$ we mean a segment $M\subset PP^{\prime }$
so that the distances of its points to $P$ and to $P^{\prime }$ are at least
$\eta _n$. Notice that $M\neq \emptyset $.

\item  A base-point associated to the vertical translation arc $PP^{\prime }$ and to
a given free side of $PP^{\prime }$ is a point $B_1$ in a mid-segment $%
M\subset PP^{\prime }$ such that either the half line $l_{B_1}$ starting
from $B$ towards the free side of $PP^{\prime }$ is such that $l_{B_1}\cap
(h(l_{B_1})\cup h(PP^{\prime })\cup h^{-1}(PP^{\prime }))=\emptyset $ or
there exists a simple arc $\beta $ starting from $B_1$, perpendicular to $%
PP^{\prime }$ and towards the free side of $PP^{\prime }$ such that $\beta $
abuts on its image and $\beta \cap (h(PP^{\prime })\cup h^{-1}(PP^{\prime
}))=\emptyset $. In the former case, we say that the base point $B_1$ with
that given free side is unbounded and in the latter case we say that the
base point $B_1$ with that given free side is bounded. One of the endpoints
of $\beta $ is $B_1$ and the other is denoted by $P_1$.
\end{enumerate}
\end{defi}

The proof of the existence of at least one base point associated to a
translation arc $PP^{\prime}$ and to any given free side of $PP^{\prime}$ is
found in \cite[Section 2.2]{kerek1}.

\begin{remark}
If the translation arc $PP^{\prime }$ is horizontal, then the definitions
above are the same and analogous results hold.
\end{remark}

Continuing our construction, we find a base point $B_1$ associated to the
vertical translation arc $PP^{\prime}$. The initial part of $L_1$ is then
defined to be the segment $BB_1$. If $B_1$ is unbounded then we are finished
and $L_1= BB_1 \cup l_{B_1}$ is the desired half line. If $B_1$ is bounded
then the horizontal segment $\beta=B_1P_1$ abuts on its image and we find an
internal point $P_1^{\prime}=h(P_1)$ or $P_1^{\prime}=h^{-1}(P_1)$ as before such that
the horizontal arc $P_1P_1^{\prime}\subset \beta$ is a translation arc. The
translation arc $P_1P_1^{\prime}$ admits a free side according to the
description above. Observe that now the free side of $P_1P_1^{\prime}$ is
either the upper or the lower side. Again we find a base point $B_2\subset
P_1P_1^{\prime}$ towards the free side of $P_1P_1^{\prime}$ and add the
simple arc $B_1B_2$ to $L_1$, now given by $L_1=BB_1B_2$. Repeating this
procedure indefinitely we arrive at one of the following cases:

\begin{enumerate}
\item[(i)]  after a finite number of steps we find an unbounded base point $
B_j\in P_{j-1}P_{j-1}^{\prime }$ and our broken half line is given by $
L_1=BB_1B_2\ldots B_jl_{B_j}$.

\item[(ii)]  all base points $B_j$ found in the construction are bounded and we define $
L_1=BB_1B_2B_3\ldots $. Then the following holds: given $n\in N^{*},$ there
exists $k_0\in N^{*}$ such that $B_k\not \in R_n,\forall k\geq k_0$. This
follows from the definition of base points and is proved in \cite{kerek1}.
\end{enumerate}

Notice that the construction of $L$ depends on the choices of the internal
base points $B_k \in P_{k-1}P_{k-1}^{\prime}$. Also, the half line $L_1$
goes to infinity and
\begin{equation}
\label{eqL1} h(L_1) \cap L_1 = (h(\dot L_1)\cup h^{-1}(\dot L_1) \cup \dot
L_1) \cap (AB \cup BC) = \emptyset.
\end{equation}

We still need a modification trick from \cite{kerek1} in the construction of
$L_1$. It is called the deviation of the path. Let $V_k=\{(x,y)\in
\mathbb{R}^2:x=k\},k\in \mathbb{Z},$ be the vertical lines at integer values and assume that

\begin{equation}
\label{finitude}0<l:=\#V_0\cap h^{-1}(V_0)<\infty.
\end{equation}

Notice that from \eqref{perix}, hypothesis \eqref{finitude} must hold as
well for each $V_k,k\in \mathbb{Z},$ and the respective intersections are
shifted by $(k,0)$.

Let $V_0\cap h^{-1}(V_0)=\{w_1,\ldots ,w_l\}$ and $w_i^{\prime
}:=h(w_i),i=1,\ldots ,l$. Consider the vertical arcs $\gamma
_i=w_iw_i^{\prime }\subset V_0,i=1,\ldots ,l$. If $\gamma_j$ does not
properly contains any $\gamma_i$ with $i\neq j$, then $\gamma_j$ is a
translation arc. We consider only such translation arcs on $V_0$ and keep
denoting them by $\gamma_j$, now with $j=1,\ldots ,l_0,l_0\leq l$. Given $j$%
, assume a free side of $\gamma_j$ is given and is to the left. Then there
exists a base point $u_{j,l}\in \dot \gamma _j$ associated to $\gamma_j$
and to that free side. Accordingly, if the given free side of $\gamma_j$ is
to the right we can also find a base point $u_{j,r}\in \dot \gamma _j$
associated to $\gamma_j$ and to that free side. Let $\gamma _j^i=\gamma
_j+(i,0)$ be the respective translation arcs on $V_i$ for all $i\in
\mathbb{Z}$ and let $u_{j,l}^i:=u_{j,l}+(i,0),u_{j,r}^i:=u_{j,r}+(i,0),i\in
\mathbb{Z}$ be their respective base points. In the following we fix these
base points $u_{j,l}^i$ and $u_{j,r}^i$ in each $\gamma _j^i$.

In the construction of $L_1$ above suppose that at some point we find a
vertical translation arc $P_{k-1}P^{\prime}_{k-1}$ with a given free side
and the horizontal arc issuing from a bounded base point $B_k\subset
P_{k-1}P^{\prime}_{k-1}$ towards the free side intersects some $V_j$ at an
internal point $z \in B_kB_{k+1}$ so that the arc $B_kz$ intersects no other
vertical $V_i,i\neq j$. Instead of adding the segment $B_kB_{k+1}$ to $L_1$
we add only the segment $B_kz$ and the new $B_{k+1}$ is determined according
to one of the alternatives found in the following theorem.

\begin{theo}[\cite{kerek1},\cite{gui}]
We have

\begin{enumerate}
\item[(i)]  there exists a vertical half line $l_z\subset V_j$ through $z$
so that the broken half-line $\alpha :=B_kz\cup l_z$ satisfies $\alpha \cap
h(\alpha )=\emptyset $ and $(h(P_{k-1}P_{k-1}^{\prime })\cup
h^{-1}(P_{k-1}P_{k-1}^{\prime }))\cap \alpha =\emptyset $. In this case we
have $L_1=BB_1\ldots B_kzl_z$ and the construction of $L_1$ is finished.

\item[(ii)]  there exists $c\in V_j, c\neq z,$ so that the broken arc $\alpha
:=B_kz\cup zc$ abuts on its image, satisfies $(h(P_{k-1}P_{k-1}^{\prime
})\cup h^{-1}(P_{k-1}P_{k-1}^{\prime }))\cap \alpha =\emptyset ,$ and
contains a translation arc $\gamma _m^j\subset V_j$ for some $m\in
\{1,\ldots ,l_0\}$. Let $B_{k+1}\in \{u_{m,l}^j,u_{m,r}^j\}$ be the base point
in $\gamma_m^j$ associated to the free side of $\alpha $. In this case we
have $L_1=BB_1\ldots B_kzB_{k+1}\ldots $ and we keep constructing $L_1$
through the horizontal arc issuing from $B_{k+1}\in V_j$ towards the free
side of $\alpha $ as before.

\item[(iii)]  the point $z$ is an internal point of a translation arc $
\gamma_m^j$ for some $m$ so that $\gamma _m^j\cap (h(P_{k-1}P_{k-1}^{\prime
})\cup P_{k-1}P_{k-1}^{\prime }\cup h^{-1}(P_{k-1}P_{k-1}^{\prime
}))=\emptyset $ and $\cup _{n\in \mathbb{Z}}h^n(\gamma_m^j)\cap B_kz=\{z\}.$
In this case the free side of $\gamma _m^j$ is the side
opposite to $B_kz$ and we find a base point $B_{k+1}\in
\{u_{m,l}^j,u_{m,r}^j\}$ associated to $\gamma _m^j$ and to that free side.
We have $L_1=BB_1\ldots B_kzB_{k+1}\ldots $ and we keep constructing $L_1$
through the horizontal arc issuing from $B_{k+1}$ towards that free side.
\end{enumerate}
\end{theo}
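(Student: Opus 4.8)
The plan is to show that running along the vertical line $V_j$ from $z$ reproduces the same unbounded/bounded dichotomy that governs the unrestricted construction of $L_1$, the only new feature being that on $V_j$ the translation arcs that can occur, together with their base points, have been fixed in advance; this will force exactly the three listed outcomes. First I would record that $B_kz$ is a sub-segment of the horizontal arc $\beta$ issued from the bounded base point $B_k$, so that $B_kz$ inherits from the base-point construction both the disjointness $B_kz\cap(h(P_{k-1}P_{k-1}^{\prime})\cup h^{-1}(P_{k-1}P_{k-1}^{\prime}))=\emptyset$ and the property that $B_kz$ does not meet its own $h^{\pm1}$-image. After a topological change of coordinates, as in the paragraph preceding Theorem \ref{teofreeside}, I may assume that $B_kz$ meets $V_j$ transversally at $z$; I then let $\delta\subset V_j$ be the vertical sub-ray issuing from $z$ in the direction prescribed by Ker\'ekj\'art\'o's deviation rule --- the one for which $B_kz\cup\delta$ remains embedded near $z$ and along which the free-side data propagates correctly. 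The behaviour of $\delta$ is controlled by \eqref{perix}, \eqref{fixa} and, crucially, by the finiteness hypothesis \eqref{finitude}: since $h^{\pm1}(\delta)\subset h^{\pm1}(V_j)$ one has $\delta\cap h^{\pm1}(\delta)\subset V_j\cap(h(V_j)\cup h^{-1}(V_j))$, which is finite --- a union of shifted copies of $\{w_1,\ldots,w_l\}$ and of $\{w_1',\ldots,w_l'\}$ --- and the prescribed minimal translation arcs $\gamma_1^j,\ldots,\gamma_{l_0}^j$ on $V_j$ are exactly the shifts of $\gamma_1,\ldots,\gamma_{l_0}$.

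I would then split according to the position of $z$. \emph{Case A: $z$ is an internal point of one of the minimal prescribed translation arcs $\gamma_m^j$.} This is alternative (iii); I would verify its three disjointness clauses --- $\gamma_m^j$ disjoint from $h^{\pm1}(P_{k-1}P_{k-1}^{\prime})$ and from $P_{k-1}P_{k-1}^{\prime}$, and $\bigcup_n h^n(\gamma_m^j)\cap B_kz=\{z\}$ --- directly from the defining properties of the bounded base point $B_k$ together with the smallness built into the numbers $\eta_n$, which force $\beta\supset B_kz$ to be short compared with the displacement of $h^{\pm1}$ on the relevant box $R_n$; the free side of $\gamma_m^j$ is then forced to be the side opposite $B_kz$ because $B_kz$ exits $\gamma_m^j$ transversally at $z$ and ${\rm Fix}(h)=\emptyset$, a Jordan-curve argument of the type already underlying Theorem \ref{teofreeside}, and $B_{k+1}$ is the base point among $u_{m,l}^j,u_{m,r}^j$ on that side. \emph{Case B: $z$ lies outside every minimal translation arc of $V_j$.} Then I would follow $\delta$. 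If $\delta$ never meets $h(\delta)\cup h^{-1}(\delta)$ nor $h^{\pm1}(P_{k-1}P_{k-1}^{\prime})$ --- a condition decided after finitely many crossings of $h^{\pm1}(V_j)$ by \eqref{finitude} --- then $\alpha:=B_kz\cup\delta$ is a proper half line with $\alpha\cap h(\alpha)=\emptyset$, by the disjointness of $B_kz$ recorded above and a no-crossing argument at $z$; this is alternative (i) with $l_z=\delta$. Otherwise let $c$ be the first point of $\delta$ for which $\alpha:=B_kz\cup zc$ abuts on its image; inspecting that first intersection point, which lies in $V_j\cap(h(V_j)\cup h^{-1}(V_j))$ and hence is a shifted copy of some $w_i$ or of $w_i'=h(w_i)$, one sees that, because $z$ lies outside every minimal translation arc, the sub-ray $[z,c]\subset V_j$ must first enter some minimal $\gamma_m^j$ through an endpoint and traverse it entirely, so $\gamma_m^j\subset\alpha$; this is alternative (ii). The clause $\alpha\cap h^{\pm1}(P_{k-1}P_{k-1}^{\prime})=\emptyset$ again follows from the base-point construction and the $\eta_n$-estimates, and the free side of $\alpha$ singles out $B_{k+1}\in\{u_{m,l}^j,u_{m,r}^j\}$.

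The main obstacle is the bookkeeping in the two ``bounded'' subcases: one must rule out the appearance of a \emph{partial} translation arc on $V_j$ --- that is, show that the translation arc picked up there is always one of the finitely many prescribed minimal $\gamma_m^j$, and that this happens before the displacement estimates attached to $R_n$ fail --- and one must check that the free side produced along $V_j$ is consistent with the one that would have arisen by continuing the construction horizontally, so that the broken half line retains the Brouwer property \eqref{eqL1}. Both are topological matters --- iterated applications of the Jordan curve theorem together with ${\rm Fix}(h)=\emptyset$, in the spirit of Theorem \ref{teofreeside} --- rather than computations, but they require tracking several arcs and their $h^{\pm1}$-images at once, which is where the arguments of \cite{kerek1} and \cite{gui} become delicate.
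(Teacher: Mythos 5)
There is no proof of this statement in the paper for you to match: the trichotomy is imported verbatim from Ker\'ekj\'art\'o \cite{kerek1} and from Guillou \cite{gui}, and the paper only explains how it is \emph{used} in the construction of $L_1$. So your proposal has to stand on its own, and as written it is an outline that defers exactly the claims that constitute the theorem --- indeed you list them yourself at the end as ``obstacles'' (ruling out partial translation arcs on $V_j$, consistency of the free side) without resolving them.

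Two of the gaps are concrete enough to be fatal as the argument stands. First, in your Case A, the hypothesis that $z$ is an internal point of a minimal $\gamma_m^j$ does not by itself yield alternative (iii): that alternative also asserts $\gamma_m^j\cap\bigl(h(P_{k-1}P_{k-1}^{\prime})\cup P_{k-1}P_{k-1}^{\prime}\cup h^{-1}(P_{k-1}P_{k-1}^{\prime})\bigr)=\emptyset$ and, more seriously, $\bigcup_{n\in\mathbb{Z}}h^n(\gamma_m^j)\cap B_kz=\{z\}$, a condition on \emph{all} iterates of $h$. The $\eta_n$-smallness built into mid-segments and base points controls a single application of $h^{\pm1}$ on a compact box $R_n$, so these clauses cannot follow ``directly from the defining properties of $B_k$''; one needs a freeness argument of Brouwer-lemma type (or a different organization of the cases), and your case split says nothing about which alternative is supposed to hold if these clauses fail for the particular $\gamma_m^j$ containing $z$. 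Second, in Case B your key step --- that at the first abutment the sub-arc $[z,c]$ must swallow a full minimal $\gamma_m^j$ --- only treats the configuration in which the image of $c$ lands on the vertical part $zc\subset V_j$; the abutment can just as well occur with $h^{\pm1}(c)\in B_kz$, the horizontal part, in which case $c$ need not lie in $V_j\cap\bigl(h(V_j)\cup h^{-1}(V_j)\bigr)$ and no vertical translation arc need be contained in $\alpha$ at all. This is precisely the ``partial translation arc'' difficulty you name but do not address. Smaller but real: alternative (i) requires $\alpha\cap h(\alpha)=\emptyset$ for the whole broken line, hence also the cross terms $h(B_kz)\cap\delta$ and $h(\delta)\cap B_kz$, which your check omits; and choosing the direction of the ray $\delta$ as ``the one prescribed by Ker\'ekj\'art\'o's deviation rule'' is circular, since that rule is what the theorem establishes. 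Until these points are argued (essentially by redoing the Jordan-curve analysis of Theorem \ref{teofreeside} in the presence of the fixed verticals, as in \cite{kerek1} and \cite{gui}), the proposal is a plausible road map rather than a proof.
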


\begin{figure}\label{figu4}
\begin{center}
\includegraphics[width=100mm]{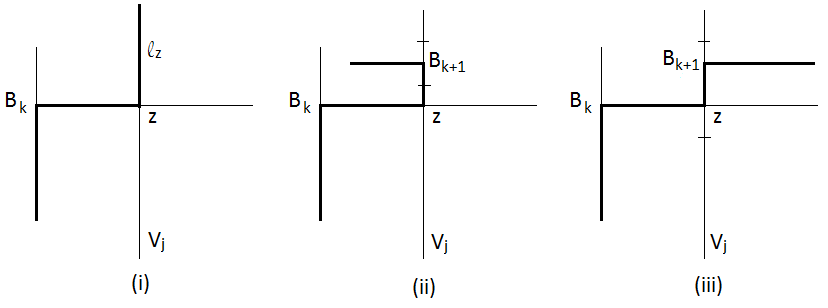}
\end{center}
\caption{The deviation of the path according to cases (i), (ii) and (iii) above. }
\end{figure}

Proceeding as above indefinitely and using the deviation of the path
whenever its conditions are met we find the desired half line $L_1$. As
explained before, $L_1$ goes to infinity and satisfies \eqref{eqL1}.

\begin{remark}
\label{remboun} Given $a\in \mathbb{R}$, let $G_a=\{(x,y)\in \mathbb{R}%
^2:y\geq a\}.$ We can extract some more information one how $L_1$ goes to
infinity if we assume the following twist condition on $h$
\begin{equation}
\label{twistinfinito}\exists y_0\in \mathbb{R}\mbox{ s.t. }p_1\circ h(x,y)>x%
\mbox{  and  }p_1\circ h^{-1}(x,y)<x,\forall (x,y)\in G_{y_0}.
\end{equation}
Suppose that at some point in the construction of $L_1$ we find a
horizontal translation arc $P_{k-1}P_{k-1}^{\prime }$ with the free side
coinciding to its upside and an associated base point $B_k\in
P_{k-1}P_{k-1}^{\prime }$. Assume that there exists a vertical segment $V$
starting from $B_k$ towards the free side so that its other extremity lies
inside $G_{y_0}$ and that $h(V)\cap V=\emptyset $. We claim that $B_k$ is an
unbounded base point and the construction of $L_1$ ends by adding to it the
vertical half line $l_{B_k}$,i.e., $L_1=BB_1\ldots B_kl_{B_k}$. To see this
we argue indirectly and assume the existence of a vertical segment $W$
starting from $B_k$ and containing $V$ such that $W$ abuts on its image. Let
$w\neq B_k$ be the other extremity of $W$. Then either $h(w)\in W$ or $%
h^{-1}(w)\in W$. However, this contradicts \eqref{twistinfinito} and proves
our claim.
\end{remark}

\begin{remark}
\label{remdev} Using the deviation of the path explained above we know that
if $L_1$ is horizontally unbounded then $L_1$ is eventually periodic. This
follows from the finiteness of the points $u_{j,l}^i,u_{j,r}^i\in V_i$ for
each $i\in \mathbb{Z}$. For instance, suppose that  $L_1$
is deviated at some $V_i,$ for some $i\in \Z$, and leaves it to the right at $u_{j,r}^i\in V_i$, for some $j\in \{1,...,l_0\}$. Suppose that after this deviation, $L_1$ is  now deviated at $V_{i+N},$ for some $N\in \mathbb{Z}^*,$  leaving it to the right at $u_{j,r}^{i+N}\in V_{i+N}.$ We continue the construction of $L_1$ from $u_{j,r}^{i+N},$  proceeding in exactly the same way as we did from $u_{j,r}^i.$ This implies that, except perhaps for its initial segments, $L_1$
is periodic, i.e., there exists a connected subset $W_0 \subset L_1$
from $u_{j,r}^i$ to $u_{j,r}^{i+N}$ so that $W_0 +(kN,0) \subset L,\forall k\in \N$. Hence we find a new periodic Brouwer line $L_{{\rm per}}$ given by $W_0+(kN,0),k\in \mathbb{Z}.$ By
construction we must have $h(L_{{\rm per}})\cap L_{{\rm per}}=\emptyset $.
\end{remark}
%
%
%
%
\begin{remark}
\label{remL2} The construction of the other half line $L_2$ from $B$ (now
starting downwards) can be done in exactly the same way as we did for $L_1$
so that by construction $L=L_1\cup L_2$ is a Brouwer line. An alternative
construction for $L_2$, which will be used in the proof of Theorem \ref
{main1}, is the following: let $\psi :[0,\infty )\to \mathbb{R}^2$ be a
proper topological embedding with $\psi (0)=B$ and let $L_2=\psi ([0,\infty
))$. Assume the $h(L_2)\cap L_2=\emptyset $ and that $\dot L_1$ and $\dot L_2
$ lie in different components of $\mathbb{R}^2\setminus (h^{-1}(L_2)\cup
AB\cup BC\cup h(L_2))$. Then one easily checks that $L=L_1\cup L_2$ is a
Brouwer line for $h$.
\end{remark}

We end this section with a proposition that will be useful in the proof of Theorem~\ref{main1} in the next section. Its proof is entirely contained in Ker\'ekj\'ar\-t\'o's construction of Brouwer lines explained above.

\begin{prop}\label{propresume}Let $h:\R^2 \to \R^2$ be an orientation preserving homeomorphism of the plane which has no fixed points and satisfies the following assumptions:
\begin{itemize}
\item[(i)]  There exists $y_0\in \mathbb{R}$ such that $$p_1\circ h(x,y)>x \mbox{ and } p_1\circ h^{-1}(x,y)<x,\forall (x,y)\in \R^2,y\geq y_0.$$
\item[(ii)] $h(x+1,y) = h(x,y)+(1,0),\forall (x,y)\in \R^2.$
\item[(iii)] There exists a vertical line $V_0$ such that $0<l:=\#V_0\cap h^{-1}(V_0)<\infty.$
 \end{itemize} Then through any point $B\in \R^2$ as above, there exists a half Brouwer line $L_1$ issuing from $B$ upwards so that the following holds:
\begin{itemize}
\item $L_1$ contains only horizontal and vertical segments.
\item if $L_1$ contains a point $q\in\{y \geq y_0\}$ then it contains the vertical upper half line through $q$.
\item if $L_1$ is horizontally unbounded then $L_1$ is eventually periodic, i.e., there exists a simple arc $W_0\subset L_1$ and an integer $N\neq 0$ so that $W_0 + (kN,0)\subset L_1,\forall k\in \N$. $|N|$ is the least positive integer with this property. In particular, this implies that $W_0 \cap W_0 + (N,0)=\{{\rm point}\}.$
\item if $L_2$ is a given half Brouwer line issuing from $B$ downwards and $\dot L_1$ and $\dot L_2$ lie in different components of $\mathbb{R}^2\setminus (h^{-1}(L_2)\cup AB\cup BC\cup h(L_2))$, then $L_1 \cup L_2$ is a Brouwer line.
\end{itemize}Here, as above, $B = h(A), C=h(B)$, $AB$ is a translation arc and $BC=h(AB)$ is horizontal.
 \end{prop}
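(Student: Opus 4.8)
The statement of Proposition~\ref{propresume} is essentially a summary of the construction carried out in this section, so the plan is to verify that each bullet is exactly what that construction produces once hypotheses (i)--(iii) are in force. First I would set up the construction: starting from the translation arc $AB$ (which we may take horizontal), put $B=h(A)$, $C=h(B)$, $BC=h(AB)$ horizontal, and run the construction of $L_1$ as described, beginning with the vertical arc $\gamma_1$ issuing upwards from $B$. Hypothesis (iii) is precisely \eqref{finitude}, so the deviation of the path is available for every $V_k$; hypothesis (ii) is \eqref{perix}; hypothesis (i) is \eqref{twistinfinito} with the same $y_0$. The construction then yields a half Brouwer line $L_1$ built from finitely or countably many segments $BB_1B_2\cdots$ together with horizontal arcs $P_kP_k'$ and vertical arcs, all of which are chosen either vertical (pieces of the $\gamma$'s or of the $V_k$'s) or horizontal (the $\beta$'s); thus the first bullet holds by inspection of how the segments are added.

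For the second bullet I would invoke Remark~\ref{remboun}: if at some stage $L_1$ reaches a point $q$ with $p_2(q)\geq y_0$, then the relevant translation arc (either the vertical $\gamma$-type arc we are currently prolonging, or a horizontal $P_{k-1}P_{k-1}'$ whose base point $B_k$ lies above $y_0$) admits a vertical segment $V$ towards its free side with the far endpoint in $G_{y_0}$ and $h(V)\cap V=\emptyset$ (the latter because on $G_{y_0}$ the map $h$ strictly moves points horizontally by (i), so no vertical segment there meets its image); Remark~\ref{remboun}'s argument then forces the current base point to be unbounded and $L_1$ to terminate with the vertical upper half line $l_{B_k}$ through that base point. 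Since all the segments between $q$ and that base point are vertical upward pieces (we never go down once in $G_{y_0}$, again by (i)), $L_1$ contains the vertical upper half line through $q$. I should be a little careful to treat the case where $q$ is an interior point of a vertical segment versus a base point, but in both cases the conclusion is the same: the remainder of $L_1$ above $q$ is a single vertical ray.

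The third bullet is exactly Remark~\ref{remdev}: if $L_1$ is horizontally unbounded it must use the deviation of the path infinitely often, and because the base points $u_{j,l}^i,u_{j,r}^i$ available on each $V_i$ are finite in number and periodically placed (by (ii) and (iii)), after finitely many deviations the construction repeats verbatim when shifted by some $N\neq 0$; taking $W_0$ to be the portion of $L_1$ between two such matching base points gives $W_0+(kN,0)\subset L_1$ for all $k\in\N$, and minimality of $|N|$ is arranged by choosing the first recurrence, which forces $W_0\cap(W_0+(N,0))$ to be a single point. The fourth bullet is immediate from Remark~\ref{remL2}: under the stated separation hypothesis on $\dot L_1,\dot L_2$ relative to $\R^2\setminus(h^{-1}(L_2)\cup AB\cup BC\cup h(L_2))$, together with \eqref{eqL1} and $h(L_2)\cap L_2=\emptyset$, one checks directly that $h(L)$ and $h^{-1}(L)$ lie in different components of $\R^2\setminus L$ for $L=L_1\cup L_2$, which is the definition of a Brouwer line. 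The main obstacle is not any single deep step — it is the bookkeeping: making sure the repeated invocations of Remarks~\ref{remboun}, \ref{remdev} and \ref{remL2} are applied under hypotheses that genuinely hold here, in particular checking that the twist hypothesis (i) really does prevent $L_1$ from ever descending again once it enters $G_{y_0}$ and that the deviation mechanism is always legitimate because (iii) holds for every $V_k$ by periodicity.
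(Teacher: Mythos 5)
Your proposal is correct and follows essentially the same route as the paper, which gives no separate argument for Proposition~\ref{propresume} beyond the statement that its proof is contained in the Ker\'ekj\'art\'o construction of Section~\ref{SecKerek}: you match hypotheses (i)--(iii) with \eqref{twistinfinito}, \eqref{perix} and \eqref{finitude}, and derive the four bullets from the construction together with Remarks~\ref{remboun}, \ref{remdev} and \ref{remL2}, exactly as intended. Your extra care about points $q$ in $\{y\geq y_0\}$ lying on vertical (not horizontal) segments is a reasonable fleshing-out of the same argument, not a different approach.
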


\section{Proof of Theorem \ref{main1}}

We start with the following lemma.

\begin{lemma}
\label{lemfix}Let $\tilde g:\tilde U\subset \tilde A\to \tilde A$ be a real
analytic area-preserving diffeomorphism defined in an open neighbourhood $%
\tilde U$ of $\mathbb{R}\times \{0\}\subset \tilde A$ so that ${\rm Fix}(\tilde
g)=\mathbb{R}\times \{0\}$. Assume that there exists a sequence of positive
real numbers $\epsilon _n\to 0^{+}$ such that each $\tilde g_{\epsilon _n}$,
defined as in \eqref{translation}, admits a fixed point $p_n\in \tilde U,\forall n,$
with $p_n\to \bar p=(\bar x,0)\in \mathbb{R}\times \{0\}$ as $n\to \infty $.
Then there exists a real-analytic curve $\gamma _0:[0,1]\to \tilde U$, $\gamma
_0(t)=(x(t),y(t))$ so that $\tilde g\circ \gamma _0(t)=(w(t),y(t))$ and  it satisfies
\begin{equation}
\label{lemres}w(0)=\bar x,w(t)<x(t)\mbox{  and  }y^{\prime }(t)>0,\forall
t\in (0,1].
\end{equation}
\end{lemma}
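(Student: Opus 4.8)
The plan is to exploit real-analyticity to control the zero set of the "horizontal displacement" function near $\bar p$, and then to extract the curve $\gamma_0$ as a branch of that zero set (or a curve transverse to it on which the displacement is the right sign). Write $\tilde g(x,y)=(u(x,y),v(x,y))$ and define the real-analytic function
$$
\Phi(x,y):=u(x,y)-x,
$$
so that $\mathrm{Fix}(\tilde g)=\mathbb R\times\{0\}$ forces $\Phi(x,0)\equiv 0$, hence $\Phi(x,y)=y\cdot\Psi(x,y)$ for a real-analytic function $\Psi$ on a neighbourhood of $\bar p$ (divide the convergent power series by $y$). A fixed point $p_n=(x_n,y_n)$ of $\tilde g_{\epsilon_n}$ satisfies $u(x_n,y_n)+\epsilon_n=x_n$ and $v(x_n,y_n)=y_n$; the first equation reads $y_n\,\Psi(x_n,y_n)=-\epsilon_n<0$, and since $p_n\notin\mathbb R\times\{0\}$ (because $\epsilon_n\neq 0$) we have $y_n\neq 0$, in fact $y_n>0$ since $\tilde g$ is defined on $\tilde A=\mathbb R\times[0,1]$ and preserves it. Therefore $\Psi(x_n,y_n)<0$ with $(x_n,y_n)\to(\bar x,0)$, so $\Psi(\bar x,0)\leq 0$.

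Next I would use area-preservation together with $\mathrm{Fix}(\tilde g)=\mathbb R\times\{0\}$ to pin down the "vertical" component near the boundary: I claim one can choose the curve so that its $\tilde g$-image has the same second coordinate $y(t)$ as stated. The cleanest route is to invoke a normal form: since $\tilde g$ is area-preserving and fixes $\mathbb R\times\{0\}$ pointwise, a generating-function / Moser-type argument (in the real-analytic category, using that the restriction to the boundary is the identity) produces real-analytic coordinates in which $\tilde g$ has the form $(x,y)\mapsto(x+y\,\Psi(x,y),\,y)$ along level sets, or at least a real-analytic foliation by curves $y=\text{const}$ that is preserved; alternatively, one works with the real-analytic curve $\gamma_0(t)=(x_0+\phi(t),\psi(t))$ to be determined and solves $p_2\circ\tilde g\circ\gamma_0=p_2\circ\gamma_0$ as a real-analytic implicit equation, which near the boundary has a solution curve emanating from $\bar p$ with $\psi'(t)>0$ because $\partial_y(v(x,y)-y)$ cannot vanish identically (else $v\equiv y$ near the boundary and, combined with $\Phi$, one contradicts $\mathrm{Fix}(\tilde g)=\mathbb R\times\{0\}$ via the curve-selection lemma applied to the fixed-point set). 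On the curve so obtained, $\tilde g\circ\gamma_0(t)=(w(t),y(t))$ automatically, and it remains to check the sign $w(t)<x(t)$, i.e. $\Phi(\gamma_0(t))<0$, i.e. $\psi(t)\Psi(\gamma_0(t))<0$: since $\psi(t)>0$ for $t>0$, this is $\Psi(\gamma_0(t))<0$, which holds for small $t$ because $\Psi(\bar x,0)\leq 0$ and — if $\Psi(\bar x,0)=0$ — one uses the curve-selection/Łojasiewicz input again: the real-analytic set $\{\Psi\le 0\}$ has $(\bar x,0)$ in its closure with the $p_n$ inside it, so by Puiseux there is a real-analytic arc through $\bar p$ along which $\Psi<0$ on $(0,1]$, and we take $\gamma_0$ to lie along (a graph over) that arc; finally reparametrize so $\gamma_0$ is a graph $x=x(t)$, $y=y(t)$ with $y'(t)>0$ and domain $[0,1]$.

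The main obstacle I expect is making the two conditions compatible simultaneously: the curve must be a graph in the vertical direction with strictly increasing height ($y'(t)>0$), it must carry the strict inequality $w(t)<x(t)$ for all $t\in(0,1]$, and its $\tilde g$-image must have the very same height $y(t)$. If $\Psi(\bar x,0)<0$ this is easy — any short nearly-vertical real-analytic arc works and one adjusts it so that $p_2\circ\tilde g$ is constant on it — but the degenerate case $\Psi(\bar x,0)=0$ forces a genuinely analytic-geometry argument (Puiseux expansion of the curve $\{\Phi=0\}$ or of $\{\Psi\le 0\}$, controlling signs along each branch), and one must be careful that the branch selected is not contained in $\mathbb R\times\{0\}$ and does not have a vertical tangency that would break the "graph in the vertical direction" requirement; here the hypothesis that the $p_n$ themselves lie in $\{\Psi<0\}$ and converge to $\bar p$ is exactly what guarantees a suitable branch exists. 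Real-analyticity is used decisively, through curve selection (Puiseux), and this is precisely where smoothness alone would fail, consistent with Theorem~\ref{main2}.
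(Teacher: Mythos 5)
There is a genuine gap, and it sits exactly at the point you flag as ``the main obstacle'': making the curve lie in the zero set of the \emph{vertical} displacement while carrying the sign condition on the \emph{horizontal} displacement. The missing observation is that the fixed points themselves solve the vertical equation: since the perturbation in \eqref{translation} is purely horizontal, every $p_n$ satisfies $v(p_n)=y_n$, i.e.\ $p_n\in\{G_2=0\}$ where $G_2(x,y):=v(x,y)-y$, and $p_n\notin\mathbb{R}\times\{0\}$ because $u(x,0)+\epsilon_n=x+\epsilon_n\neq x$. The paper's proof applies the local structure theorem for real-analytic sets (Milnor, Lemmas 3.1 and 3.3) to $\{G_2=0\}$ near $\bar p$: it is a finite union of analytic arcs emanating from $\bar p$, two of which are the boundary, and the presence of the off-boundary points $p_n\to\bar p$ forces an extra branch $\eta_{j_0}$ containing a subsequence of the $p_n$. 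Taking $\gamma_0$ along that branch makes $p_2\circ\tilde g\circ\gamma_0=p_2\circ\gamma_0$ hold by construction, gives $y'(t)>0$ for small $t$ by analyticity, and settles the sign for free: on the branch $u-x$ is analytic and cannot vanish off the boundary (such a zero would be a fixed point of $\tilde g$), while it equals $-\epsilon_n<0$ at the $p_n$, so $w(t)<x(t)$. Your proposal never uses $p_n\in\{G_2=0\}$; you select an arc inside $\{\Psi\le 0\}$ (a constraint on the horizontal displacement only), and nothing guarantees that arc lies in $\{G_2=0\}$, so the stated identity $\tilde g\circ\gamma_0(t)=(w(t),y(t))$ is not secured.

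The two specific justifications you offer for producing such a curve do not work. First, the claim that $\partial_y(v-y)\not\equiv 0$ yields a solution curve of $v=y$ emanating from $\bar p$ transverse to the boundary is false: $\{G_2=0\}$ always contains $\mathbb{R}\times\{0\}$, so the implicit function theorem does not apply at $\bar p$, and e.g.\ $v=y+y^2$ has $\partial_y(v-y)\not\equiv 0$ yet $\{G_2=0\}$ is only the boundary (in the lemma's setting this is ruled out precisely by the existence of the $p_n$, which you do not invoke here). Second, the asserted contradiction in the degenerate case $v\equiv y$ is wrong: $v\equiv y$ is perfectly compatible with ${\rm Fix}(\tilde g)=\mathbb{R}\times\{0\}$ (e.g.\ $(x,y)\mapsto(x+yR(y),y)$ with $R<0$), and the paper treats it as a separate case, using area preservation to get $u(x,y)=x+yR(y)$ and the $p_n$ to get $R<0$, whence the vertical segment $\gamma_0(t)=(\bar x,t)$ works. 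Also, ``adjusting'' a nearly vertical arc so that the image has the same height is not a construction; it again amounts to forcing the arc into $\{G_2=0\}$. Your use of the $p_n$ to control the sign of $u-x$, and your appeal to Puiseux/curve selection, are in the right spirit, but the curve must be selected inside $\{G_2=0\}$ through the $p_n$, not inside $\{\Psi\le 0\}$.
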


\begin{proof}
Let us write $\tilde g(x,y)=(g_1(x,y),g_2(x,y))$ and let $G_2(x,y):=g_2(x,y)-y$. We may express $G_2$ as a power series in $x-\bar x$ and $y$ near $\bar p$ which converges in $B_\epsilon:=\{(x,y)\in \mathbb{R}^2:(x-\bar x)^2+y^2 \leq \epsilon^2\}$ with $\epsilon>0$ small.

If $G_2$ vanishes identically then $g_2(x,y)=y$ near $\bar p$. By preservation of area and the fact that $g_1(x,0)=(x,0),\forall x,$ we have $g_1(x,y)=x + yR(y)$ for a real-analytic function $R$ defined near $y=0$. Since ${\rm Fix }(\tilde g)=\mathbb{R}\times \{0\}$, $R$ does not vanish identically. The existence of $p_n$ as in the hypothesis implies $R(y)<0$ for $y$ small. In this case we can define the curve $\gamma_0$ by $\gamma_0(t)=(\bar x,t)$, with $t\geq 0$ small.

Now assume that $G_2$ does not vanish identically. We investigate the zeros of $G_2$ near $\bar p$ in $B_\epsilon$ for $\epsilon>0$ small. Notice that $\bar p \in \mathbb{R}\times \{0\} \cap B_\epsilon \subset \{G_2=0\}$ and thus $\bar p$ is not an isolated point of $\{G_2=0\}.$ Since $G_2$ is real-analytic we take $\epsilon>0$ small and find an even number of real-analytic embedded curves $\eta_i:[0,1] \to B_\epsilon$, $i=1\ldots 2m,$ with $\eta_i(0)=\bar p$, so that $\{G_2=0\} \cap B_\epsilon = \cup_{i=1}^{2m} {\rm Image}(\eta_i),$ see lemmas 3.1 and 3.3 of \cite{milnor}. Taking $\epsilon>0$ even smaller, we may assume that the image of any two of these curves intersect each other only at $\bar p$. Also, we may choose $\eta_1(t)=(\bar x + \epsilon t,0)$ and $\eta_2(t)=(\bar x - \epsilon t,0),t\in[0,1],$ since $\mathbb{R}\times \{0\}\subset \{G_2=0\}$. The existence of the sequence $p_n \to \bar p$ as in the hypothesis implies that $m\geq 2$ and therefore we find $j_0 \in \{3,\ldots,2m\}$ and a subsequence of $p_n$, still denoted by $p_n$, such that $p_n\in {\rm Image}(\eta_{j_0})$. Moreover, since $\eta_{j_0}(t)=(x_{j_0}(t),y_{j_0}(t))$ is real analytic, we have $y_{j_0}'(t)>0,\forall t\in (0,\mu],$ for some $\mu>0$ small, and, therefore, ${\rm Image}(\eta_{j_0}|_{[0,\mu]})$ projects injectively into the $y$-axis. Finally, we define $\gamma_0(t)=\eta_{i_0}(\mu t),t\in[0,1]$. By the properties of $p_n$ and the fact that ${\rm Fix }(\tilde g)=\mathbb{R}\times \{0\}$, we get that $\gamma_0$ satisfies the desired properties as in the statement. \end{proof}

To prove Theorem \ref{main1} we argue indirectly. Assume that there exists a
sequence $\epsilon _n\to 0^{+}$ so that $\tilde f_{\epsilon _n}$, defined as
in \eqref{translation}, admits a fixed point $p_n$. By the periodicity of $p_1 \circ \widetilde{f}(x,y)-x$ in $x$ we can assume that $p_n\to \bar p=(\bar x,0)\in
\mathbb{R}\times \{0\}$. This implies that $\tilde f$, restricted to a
neighbourhood $\tilde U$ of $\mathbb{R}\times \{0\}$, satisfies the conditions
of Lemma \ref{lemfix}. So we find a real-analytic curve $\gamma _0:[0,1]\to
\tilde A$, $\gamma _0(t)=(x(t),y(t))$, so that $\tilde f\circ \gamma
_0(t)=(w(t),y(t))$ satisfies \eqref{lemres}. In what follows, the curve $\gamma_0$ will be prolonged to a
Brouwer line $\tilde L$ in $\tilde A$ satisfying one of the possibilities:
\begin{itemize}
\item $\tilde L$ hits $\R \times \{1\}$. Since $\tilde f$ moves points in $\R \times \{1\}$ to the right and moves $\gamma_0$ to the left, $\tilde L$ must intersect its image, a contradiction.
\item $\tilde L$ is eventually periodic. In this case we obtain an area-preserving diffeomorphism of the closed annulus with a homotopically non-trivial simple closed curve which is disjoint from its image, again a contradiction.
\item $\tilde L$ is bounded and accumulates at $\R \times \{0\}$. In this case we obtain an area-preserving homeomorphism of the $2$-sphere admitting a simple closed curve bounding a topological disk whose image is properly contained inside itself, a contradiction.
\end{itemize}

Given $t\in (0,1]$ let $B_t:=\gamma _0(t),C_t:=\tilde f(B_t)$ and $%
A_t:=\tilde f^{-1}(B_t)$. Let $B_tC_t$ be the horizontal segment connecting $%
B_t$ and $C_t$, and let $A_tB_t:=\tilde f^{-1}(B_tC_t)$ be its inverse image. We
claim that $A_tB_t$ is a translation arc for $\tilde f$ if $t$ is small
enough. To see this, assume this is not the case so that we can find a point
$B_t\neq z\in B_tC_t$ which is also contained in $A_tB_t$. It follows that $%
p_1(z)>p_1(B_t)$. Since $\frac{ \partial (p_1 \circ\tilde f)}{\partial x}(x,y)\to 1$
as $(x,y)\to \bar p$, we have $p_1 \circ \tilde f(z)-p_1 \circ \tilde f(B_t)=\frac{\partial
(p_1 \circ \tilde f)}{\partial x}(\xi )(p_1(z)-p_1(B_t))>0$, for some $\xi \in B_tC_t$
and $t>0$ small. Since $\tilde f(z)\in B_tC_t$, we also have $p_1 \circ \tilde
f(z)\leq p_1(C_t)=p_1 \circ \tilde f(B_t)$. This leads to a contradiction which
proves that indeed $A_tB_t$ is a translation arc for $\tilde f$ and
\begin{equation}
\label{desigp1}p_1(z)>p_1(B_t),
\end{equation}
for all internal points $z\in A_tB_t$ where $t\in (0,1]$ is fixed and small.

Let us fix a sufficiently small $t_0>0$ such that for some number $c_0 \in
\mathbb{R},$ $\widetilde{f}^{-1}(\gamma _0([0,t_0]))\cup \gamma
_0([0,t_0])\cup \widetilde{f}(\gamma _0([0,t_0]))\cup B_{t_0}C_{t_0}\cup
A_{t_0}B_{t_0}$ is disjoint from all the verticals $V_{k+c_0}=\{(x,y)\in
\widetilde{A}:x=k+c_0\},k\in \mathbb{Z}.$ We may assume without loss of
generality that $c_0=0.$

In order to directly apply results from Ker\'ekj\'art\'o's construction of Brou\-wer lines in the plane as stated in Proposition~\ref{propresume},
we consider the homeomorphism $d:{\rm interior}(\tilde A)\to
\mathbb{R}^2$ given by $d(x,y)=(x,\frac{y-1/2}{y(1-y)})$ and the induced
orientation preserving homeomorphism $h:\mathbb{R}^2\to \mathbb{R}^2$ given
by $h=d\circ \tilde f\circ d^{-1}$. From the hypothesis ${\rm Fix}(\tilde f)= \R \times \{0\}$, we get ${\rm Fix}(h)= \emptyset$.

Let $A:=d(A_{t_0}),B:=d(B_{t_0})$ and $C:=d(C_{t_0}).$ Denote by $AB$ the
simple arc $d(A_{t_0}B_{t_0})$ and by $BC$ its image under $h$. Notice that $%
AB$ is a translation arc and that $BC$ is a horizontal simple arc. From
\eqref{desigp1}, the vertical line through $B$ intersects $AB$ and $BC$ only
at $B$. Hence we can start the construction of a Brouwer line for $h$ with the
vertical line starting from $B$ towards the upside and proceeding as in
Section \ref{SecKerek}, thus obtaining the half line $L_1$. To obtain $L_2$
we simply define it by $L_2=d(\gamma _0|_{(0,t_0]})$. It follows that $%
L=L_1\cup L_2$ is a Brouwer line, see Remark \ref{remL2}. Let $\tilde{L}
:=d^{-1}(L)=\tilde L_1 \cup \tilde L_2$ and observe that
\begin{equation}
\label{BL}\tilde f(\tilde{L})\cap \tilde{L}=\emptyset
\end{equation}

\begin{figure}\label{figu3}
\begin{center}
\includegraphics[width=65mm]{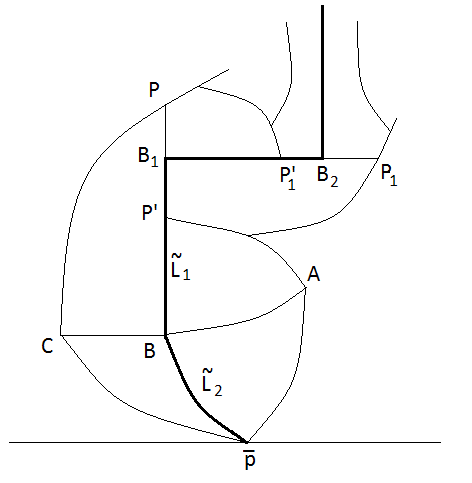}
\end{center}
\caption{The Brower line $\tilde L= \tilde L_1 \cup \tilde L_2\subset \tilde A.$}
\end{figure}

Now we prove that the existence of the Brouwer line $\tilde{L}$ leads to
a contradiction. First, from the twist
condition \eqref{twist}, we can find $0<\delta <1$ such that for all $%
(x,y)\in S_\delta :=\{(x,y)\in \tilde A:\delta \leq y\leq 1\}$, we have $%
p_1\circ \tilde f(x,y)>x$ and $p_1\circ \tilde f^{-1}(x,y)<x$. This implies
that $h$ satisfies condition \eqref{twistinfinito} for $y_0=\frac{\delta
-1/2 }{\delta (1-\delta )}$, see Remark \ref{remboun}. It follows that if $
\tilde{L}$ hits $S_\delta $ then $\overline{\tilde{L}}$ contains a
vertical segment with end point  $z_0\in \mathbb{R}\times \{1\}$. By
construction, points of $\overline{\tilde{L}}$ near but different from $%
\bar p$ are mapped under $\tilde f$ to the left, while points near $z_0$ are
mapped to the right. This implies that $\tilde f(\tilde{L})\cap
\tilde{L}\neq \emptyset $, which contradicts \eqref{BL}. Hence we can
assume that $\tilde{L}$ does not accumulate at $\mathbb{R}\times \{1\}$.

Since $\tilde{f}$ is analytic and $p_1\circ
\tilde{f}(x,1)>x,\forall x\in \mathbb{R},$ we get that $\tilde{f}^{-1}(V_k)\cap V_k=(
\tilde{f}^{-1}(V_0)\cap V_0)+(k,0)$ is a finite set for all $k \in \Z$ and, therefore, condition \eqref{finitude} holds for $h$. This implies that if $\widetilde{L}$ is horizontally unbounded then, as explained in Remark \ref{remdev}, we can find $N\in \mathbb{Z}$ and another Brouwer line $\tilde
L_{{\rm per}}=d^{-1}(L_{{\rm per}})\subset {\rm interior}(\tilde A)$ which
is $N$-periodic in $x$, i.e., $\tilde L_{{\rm per}}+(kN,0)=\tilde L_{{\rm per}%
},\forall k\in \mathbb{Z}$. Let $f_N:A_N\to A_N$ be the map induced by $%
\tilde f$ on the annulus $A_N:=\tilde A/T_N$, where $T_N:\tilde A\to \tilde A
$ is the horizontal translation $T_N(x,y)=(x+N,y)$. Let $p_N:\tilde A\to A_N$
be the associated covering map and let $L_N:=p_N(\tilde L_{{\rm per}})$. From the
properties of $\tilde L_{{\rm per}}$ and of the map $\tilde f$ we see that $%
L_N$ and $f_N(L_N)$ are disjoint simple closed curves which are
homotopically non-trivial. Let $C_{-}$ be the topological closed annulus
bounded by $L_N$ and $p_N(\mathbb{R}\times \{0\})$. Then either $f_N$ or $%
f_N^{-1}$ maps $C_{-}$ properly into itself. Since $f_N$ preserves a finite
area form, we get a contradiction. Hence we can assume that $%
\tilde L$ is horizontally bounded and accumulates only at $\mathbb{R}\times
\{0\}$.

Now we find $N_0\in \mathbb{N}$ large enough so that $\tilde L\cap
(\tilde L+(N_0,0))=\emptyset ,$ which implies by Brouwer's lemma (see for
instance \cite{brown}) that \begin{equation}\label{brouwerlema} \tilde
L\cap (\tilde L+(kN_0,0))=\emptyset, \forall k\in \mathbb{Z}^*.\end{equation} As before we consider the
annulus $A_{N_0}:=\tilde A/T_{N_0}$ and identify the points in each
component of $\partial A_{N_0}$ to obtain a topological sphere $S^2$. We end
up with a map $\widehat{f}_{N_0}:S^2\to S^2$ induced by $f_{N_0}$ which
preserves orientation and a finite area form. The closure of $p_{N_0}(\tilde L)$
corresponds to a simple closed curve $\gamma _0\subset S^2$ passing through
the pole $p_0$, which corresponds to the lower component of $\partial A_{N_0}
$. This last assertion follows from \eqref{brouwerlema}. Since $\tilde L$ is a Brouwer line we see that $\widehat{f}_{N_0}(\gamma
_0)\cap \gamma _0=\{p_0\}$ and that $\widehat{f}_{N_0}$ maps properly one
component of $S^2\setminus \gamma _0$ into itself. This contradicts the preservation of a finite area form and shows that $\tilde L$ cannot exist.
The proof of Theorem \ref{main1} is complete.

\section{Proof of Theorem \ref{main2}}

\label{secrs}

Our aim in this section is to construct an annulus diffeomorphism $f:S^1
\times [0,1] \to S^1 \times [0,1]$, homotopic to the identity map, which
satisfies

\begin{itemize}
\item[(i)]  $f$ is smooth and area-preserving.

\item[(ii)]  $\text{Fix}(f)=S^1\times \{0\}$.

\item[(iii)]  If $\tilde f:\mathbb{R}\times [0,1]\to \mathbb{R}\times [0,1]$
is the lift of $f$ satisfying $\tilde f(x,0)=(x,0),\forall x\in \mathbb{R}$,
then $p_1\circ \tilde f(x,1)>x,\forall x\in \mathbb{R}.$

\item[(iv)]  Given $\epsilon >0$, if $f_\epsilon :S^1\times [0,1]\to
S^1\times [0,1]$ is the map induced by the lift $\tilde f_\epsilon :=\tilde
f+(\epsilon ,0)$ as before, then there exists a positive sequence $(\epsilon _n)_{n\in \mathbb{N}}$ with
$\epsilon _n\to 0^{+}$ as $n\to \infty $, such that $\text{Fix}(\tilde f_{\epsilon
_n})\neq \emptyset ,\forall n$.
\end{itemize}

As proved in Theorem \ref{main1}, such a diffeomorphism cannot exist if
smoothness is replaced by real-analyticity in (i).

\subsection{Area-preserving maps and generating functions}

We start by recalling basic facts on area preserving maps associated to
generating functions. Let $U:= \{(X,y) \in \mathbb{R}\times [0,1]: X^2 + y^2
<\varepsilon \}$, $\varepsilon>0$, and $g:U \to \mathbb{R}$ be a smooth
function so that
\begin{equation}
\label{hipg} D^{\nu} g|_{\{y=0\}\cap U} \equiv 0,\forall 0\leq |\nu| \leq 2.
\end{equation}
We denote by $G:U \to \mathbb{R}$ the function given by
\begin{equation}
\label{generating} G(X,y):=Xy - g(X,y).
\end{equation}
Let $(x,Y)\in \mathbb{R}\times [0,1]$ be given by
\begin{equation}
\label{xY}
\begin{aligned} x:=  G_y = & X - g_y(X,y), \\ Y:=  G_X = & y - g_X(X,y).\end{aligned}
\end{equation}

We see from the first equation of \eqref{xY} and the hypothesis \eqref{hipg}
on $g$ that we can use the implicit function theorem to write $X=\alpha(x,y)$
for $|(x,y)|$ small, where $\alpha$ is a smooth map satisfying $%
\alpha(x,0)=x, \forall x$. In this case $Y=y+g_X(\alpha(x,y),y)=\beta(x,y)%
\geq 0$, where $\beta$ is smooth and satisfies $\beta(x,0)=0,\forall x$. Let
us denote by $\bar f:V\subset \mathbb{R}\times [0,1] \to \mathbb{R}\times
[0,1]$ the map given by
$$
(X,Y)=\bar f(x,y):= (\alpha(x,y),\beta(x,y)),%
$$
where $V$ is a small neighborhood of $(0,0)\in \mathbb{R}\times [0,1]$. We
say that $\bar f$ is a local map associated to the generating function $G$.
Moreover, $\bar f|_{\mathbb{R}\times \{0\}}$ is the identity map.

\begin{prop}
The map $\bar f$ preserves the area form $dx\wedge dy$ on $\mathbb{R}\times
[0,1]$, i.e.,
$$
dX\wedge dY=dx\wedge dy.
$$
\end{prop}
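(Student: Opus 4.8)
The plan is to realize the generating function $G$ as a potential for a suitable $1$-form and then invoke $d\circ d=0$. First I would observe that, shrinking $V$ if necessary, the assignment $(x,y)\mapsto(X,y)=(\alpha(x,y),y)$ is a diffeomorphism onto its image: by \eqref{xY} and the flatness hypothesis \eqref{hipg}, $\partial X/\partial x$ is close to $1$ near the origin (this is exactly the implicit function theorem argument already used to define $\alpha$ and $\beta$). Hence $(X,y)$ may be used as smooth local coordinates, in which $x$ and $Y$ are smooth functions. Viewing $G(X,y)=Xy-g(X,y)$ in these coordinates, the two identities in \eqref{xY} read precisely $G_X=Y$ and $G_y=x$, so that
\[
dG = G_X\,dX + G_y\,dy = Y\,dX + x\,dy .
\]

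Applying the exterior derivative and using $d(dG)=0$ gives
\[
0 = dY\wedge dX + dx\wedge dy = -\,dX\wedge dY + dx\wedge dy ,
\]
that is, $dX\wedge dY = dx\wedge dy$. It remains only to read this identity correctly: along $\bar f$ we have $(X,Y)=\bar f(x,y)$, so expressing $X=\alpha(x,y)$ and $Y=\beta(x,y)$ as functions of $(x,y)$ — legitimate because $(X,y)$ and $(x,y)$ differ by a diffeomorphism — the form $dX\wedge dY$ becomes exactly the pullback $\bar f^{*}(du\wedge dv)$ of the area form on the target. Thus the displayed equality is precisely the assertion $\bar f^{*}(dx\wedge dy)=dx\wedge dy$.

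There is no genuine obstacle in this argument; the computation $d^{2}G=0$ does all the work, and the classical Poincar\'e--Birkhoff--type bookkeeping is avoided entirely. The only point deserving (routine) care is the verification that $(X,y)$ is an admissible coordinate system near the origin, which is immediate from the nondegeneracy already established above. (One could instead prove $dX\wedge dY=dx\wedge dy$ by differentiating the relations $x=X-g_y(X,y)$, $Y=y-g_X(X,y)$ and comparing Jacobians, but this is more cumbersome and the differential-form computation is cleaner.)
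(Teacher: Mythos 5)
Your proof is correct, and it is essentially the paper's argument in different packaging: the paper differentiates the relations \eqref{xY} and notes that $dx\wedge dy$ and $dX\wedge dY$ are both multiples of $dX\wedge dy$, with factors involving $g_{yX}$ and $g_{Xy}$ respectively, so the smoothness of $g$ (equality of mixed partials) gives the claim --- which is exactly the content of your $d(dG)=0$, the symmetry of second derivatives in disguise. The one point you rightly flag, that $(X,y)$ is a legitimate coordinate system near the origin, is already guaranteed by the implicit function theorem step used to define $\alpha$ and $\beta$, so nothing is missing.
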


\begin{proof}From \eqref{xY} we have \begin{equation} \label{extd} \begin{aligned}dx = &  (1+g_{yX})dX + g_{yy}dy & \Rightarrow & \hspace{0.2cm} dx \wedge dy= (1+g_{yX})dX \wedge dy, \\ dY = & (1+g_{Xy})dy + g_{XX}dX & \Rightarrow & \hspace{0.2cm} dX \wedge dY =  (1+g_{Xy})dX \wedge dy.\end{aligned}\end{equation}Since $g$ is smooth, the proposition follows. \end{proof}

\subsection{A special generating function}

Let $\rho:[0,\infty) \to [0,1]$ be a smooth function satisfying $\rho\equiv
1 $ in $[0,\frac{1}{16}]$, $\rho\equiv 0$ in $[\frac{1}{4},\infty)$ and $%
\rho^{\prime}<0$ in $(\frac{1}{16},\frac{1}{4})$. We define the vector field
$X$ on the strip $W_1 := \mathbb{R}\times [-1,1]$ by
$$
X(x,y) = \rho(x^2+y^2)\cdot (-y,x).%
$$
It is clear that $X$ is smooth, $X\equiv 0$ outside $B(1/2):=\{(x,y)\in
W_1:x^2+y^2 \leq 1/4\}$ and $X(x,y)=(-y,x)$ in $B(1/4)$.

The flow $\{\varphi_t\}$ of $X$ on $W_1$ is defined for all $t\in \mathbb{R}$
and satisfies
\begin{equation}
\label{fluxo}%
\begin{aligned} & \varphi_t(x,y)=  (x,y),\forall (x,y)\in W_1 \setminus B(1/2),\forall t. \\ & \varphi_{\pi}(x,y)=  -(x,y),\forall (x,y)\in B(1/4).\end{aligned}
\end{equation}

Now let $W_0: = \mathbb{R}\times [0,1]$. For each $k \in \mathbb{N},$ let $%
F_k:=\mathbb{R}\times (\frac{1}{2^{k+1}},\frac{1}{2^k}]\subset W_0,
F_{\infty}:=\mathbb{R}\times \{0\}\subset W_0$ and consider the
diffeomorphism
\begin{equation}
\label{tk}
\begin{aligned}  t_k:  F_k & \to W_1\setminus \mathbb{R}\times \{-1\},k \in \mathbb{N},\\  (x,y) & \mapsto (2^{k+2}x,2^{k+2}y-3).\end{aligned}
\end{equation}
Letting $\partial_k^+ := \mathbb{R}\times \{1/2^{k}\}\subset F_k$, we
observe that $t_k(\partial_k^+) = \mathbb{R}\times \{1\}$.

Next let us define a map $\psi:W_0 \to W_0$ by
\begin{equation}
\label{eqpsi}
\begin{aligned}  \psi|_{F_k}:= & t_k^{-1} \circ \varphi_{\pi}  \circ t_k, \forall k, \\ \psi|_{F_{\infty}}:= & \text{Id}|_{F_{\infty}}.  \end{aligned}
\end{equation}

Let $p_k:=(0,\frac{3}{2^{k+2}})\in F_k\subset W_0$ be the `midpoint' of $F_k$%
. From \eqref{fluxo}, \eqref{tk} and \eqref{eqpsi} we note that
\begin{equation}
\label{suporte} \text{supp} \psi = \overline{\bigcup_{k\geq 0}
B_{p_k}(1/2^{k+3})},
\end{equation}
where $B_p(r)$ denotes the closed ball centered at $p$ with radius $r$.
Moreover, $\psi$ is the identity map when restricted to a small
neighbourhood of each $\partial_k^+, \forall k.$ These observations together
with the second equation in \eqref{eqpsi} imply that $\psi$ is smooth in $%
W_0 \setminus \{(0,0)\}.$ We also get that $\psi$ is a diffeomorphism when
restricted to $W_0 \setminus \{(0,0)\}.$ From the second equation in
\eqref{fluxo} we have
\begin{equation}
\label{inverte} \psi(x,y)= 2p_k-(x,y), \forall (x,y)\in
B_{p_k}(1/2^{k+4}),\forall k.
\end{equation}

Let $h:[0,1]\to [0,\infty)$ be the smooth function given by
\begin{equation}
\label{defih}
\begin{aligned} h(t)= & e^{-1/t},\forall t>0,\\ h(0) = & 0.\end{aligned}
\end{equation}
Note that $h$ is flat at $t=0$, i.e.,
\begin{equation}
\label{flat} h^{(n)}(0)=0, \forall n.
\end{equation}
Observe also that given $l \in \mathbb{N}$, we find polynomial functions $%
P_l,Q_l$ such that
\begin{equation}
\label{derivah} h^{(l)}(t)= e^{-1/t} \frac{P_l(t)}{Q_l(t)},\forall t>0.
\end{equation}
This can easily be proved by induction.

Now let $g:W_0 \to [0,\infty)$ be defined by
\begin{equation}
g:=h \circ p_2 \circ \psi.
\end{equation}

\begin{prop}
\label{propsmooth}We have the following:

\begin{itemize}
\item[(i)]  The function $g$ is smooth and $D^\nu g|_{\mathbb{R}\times
\{0\}}\equiv 0, \forall \nu \geq 0$.

\item[(ii)]  The set $\text{Crit}(g)$ of critical points of $g$ coincides
with $\mathbb{R}\times \{0\}$.

\item[(iii)]  There exists a positive sequence $(s_k)_{k \in \mathbb{N}}$
satisfying $s_k \to 0^+$ as $k \to \infty$ such that $\nabla
g|_{\partial_k^+}=(0,s_k),\forall k.$

\item[(iv)]  There exists a positive sequence $(m_k)_{k \in \mathbb{N}}$
satisfying $m_k \to 0^+$ as $k \to \infty$ such that $\nabla g
(p_k)=(0,-m_k),\forall k.$
\end{itemize}
\end{prop}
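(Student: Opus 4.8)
The plan is to establish the four items in order, the only subtle one being the smoothness and flatness of $g$ at the origin in item (i); items (ii)--(iv) then follow from the explicit local form of $\psi$ near $\mathbb{R}\times\{0\}$, near $\partial_k^+$, and near $p_k$, together with the positivity of $h'(t)=t^{-2}e^{-1/t}$ for $t>0$.

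To prove (i), I first note that $g=h\circ p_2\circ\psi$ is smooth on $W_0\setminus\{(0,0)\}$, being a composition of the smooth diffeomorphism $\psi$ of $W_0\setminus\{(0,0)\}$ with the smooth function $h$, and that near any point of $(\mathbb{R}\times\{0\})\setminus\{(0,0)\}$ a neighbourhood is disjoint from $\operatorname{supp}\psi$ by \eqref{suporte}, so that there $g(x,y)=h(y)$, which is flat at $y=0$ by \eqref{flat}. Hence everything reduces to the behaviour at the origin. The key geometric observation is that $\psi$ maps each ball $B_{p_k}(1/2^{k+3})$ into itself: by \eqref{tk} one has $t_k\bigl(B_{p_k}(1/2^{k+3})\bigr)=B(1/2)$, and $\varphi_\pi$ preserves $B(1/2)$ since the vector field $X$ is supported there. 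Consequently $p_2\circ\psi$ takes values in $[5/2^{k+3},7/2^{k+3}]$ on $B_{p_k}(1/2^{k+3})$, so that $0\le p_2\circ\psi(x,y)\le 2|(x,y)|$ near the origin, with the value $0$ attained exactly when $y=0$; in particular $0\le g\le h\bigl(2|(x,y)|\bigr)\to 0$ as $(x,y)\to(0,0)$, so $g$ is continuous at the origin with value $0$.

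The heart of the matter is the estimate on the higher derivatives near the origin. Although $\psi|_{F_k}=t_k^{-1}\circ\varphi_\pi\circ t_k$ has derivatives that grow with $k$, the growth is only polynomial in $2^k$ since $t_k$ is affine with linear part $2^{k+2}\,\mathrm{Id}$: the higher-order chain rule gives $\|D^n\psi\|\le C_n\,2^{(k+2)(n-1)}$ on $B_{p_k}(1/2^{k+3})$, where $C_n:=\sup\|D^n\varphi_\pi\|<\infty$. On the other hand, by \eqref{derivah} one has $|h^{(l)}(t)|\le C_l'\,t^{-N_l}e^{-1/t}$ for small $t>0$ and suitable constants $C_l'>0$, $N_l\in\mathbb{N}$, which on $B_{p_k}(1/2^{k+3})$, where $t=p_2\circ\psi\in[5/2^{k+3},7/2^{k+3}]$, is at most $C_l'\,2^{(k+3)N_l}e^{-2^{k+3}/7}$. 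Feeding these two bounds into the Fa\`{a} di Bruno formula for $D^n(h\circ p_2\circ\psi)$ yields $\|D^n g\|\le\Pi_n(2^k)\,e^{-2^{k+3}/7}$ on $B_{p_k}(1/2^{k+3})$ for a polynomial $\Pi_n$, hence $\sup_{B_{p_k}(1/2^{k+3})}\|D^n g\|\to 0$ as $k\to\infty$. Combining this with the fact that off $\operatorname{supp}\psi$ near the origin $g=h(y)$ has all its derivatives tending to $0$ as $y\to 0^+$, we conclude that every partial derivative $D^\nu g$, already continuous on $W_0\setminus\{(0,0)\}$, extends continuously to all of $W_0$ with the value $0$ at the origin. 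The standard fact that a continuous function which is $C^\infty$ off a single point and all of whose partials extend continuously across that point is itself $C^\infty$ then gives $g\in C^\infty(W_0)$ with $D^\nu g|_{\mathbb{R}\times\{0\}}\equiv 0$ for every $\nu$, which is (i). I expect this estimate to be essentially the only real work; (ii)--(iv) are bookkeeping.

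For (ii): $\mathbb{R}\times\{0\}\subset\operatorname{Crit}(g)$ by (i), and conversely if $y>0$ then $p_2\circ\psi(x,y)>0$, so $h'(p_2\circ\psi(x,y))>0$ and $\nabla g(x,y)=h'(p_2\circ\psi(x,y))\,\bigl(D\psi(x,y)\bigr)^{\mathrm T}e_2\ne 0$ because $D\psi$ is invertible on $W_0\setminus\{(0,0)\}$; hence $\operatorname{Crit}(g)=\mathbb{R}\times\{0\}$. For (iii): $\psi\equiv\mathrm{Id}$ in a neighbourhood of $\partial_k^+=\mathbb{R}\times\{1/2^k\}$, since by \eqref{tk} this line is carried by $t_k$ (resp.\ by $t_{k-1}$, when $k\ge 1$) to $\mathbb{R}\times\{1\}$ (resp.\ $\mathbb{R}\times\{-1\}$), both disjoint from $B(1/2)$, where $\varphi_\pi=\mathrm{Id}$; hence $g=h(y)$ there and $\nabla g|_{\partial_k^+}=(0,h'(1/2^k))=:(0,s_k)$ with $s_k=2^{2k}e^{-2^k}>0$ and $s_k\to 0^+$. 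For (iv): \eqref{inverte} gives $\psi(x,y)=2p_k-(x,y)$ on $B_{p_k}(1/2^{k+4})$, so there $p_2\circ\psi(x,y)=3/2^{k+1}-y$ and $g(x,y)=h(3/2^{k+1}-y)$; differentiating at $p_k=(0,3/2^{k+2})$ gives $\nabla g(p_k)=(0,-h'(3/2^{k+2}))=:(0,-m_k)$ with $m_k=(2^{k+2}/3)^2e^{-2^{k+2}/3}>0$ and $m_k\to 0^+$. This completes the proof.
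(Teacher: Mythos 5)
Your proposal is correct and takes essentially the same route as the paper: items (ii)--(iv) are identical, and for (i) both arguments reduce smoothness and flatness at the origin to showing that every $D^\nu g$ tends to $0$ there, by balancing the polynomial growth (in $2^k$, equivalently $1/y$) of the derivatives of $\psi$ against the $e^{-1/t}$-type decay of $h^{(l)}$ through a Fa\`a di Bruno expansion, and then concluding via the mean-value/difference-quotient induction that underlies your ``standard fact.'' The only cosmetic difference is that the paper gets a single uniform bound $|D^\beta g|\le e^{-1/(2y)}M_\beta/y^{m_\beta}$ from the scaling identity $p(x,y)=2^{-n}p(2^nx,2^ny)$ together with $y/2\le p\le 2y$, whereas you estimate ball-by-ball on $\operatorname{supp}\psi$ and separately use $g=h(y)$ off the support.
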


Postponing its proof to Section \ref{proof} below, we use Proposition \ref
{propsmooth} to show that $g$ induces a diffeomorphism $f:S^1 \times [0,1]
\to S^1 \times [0,1]$ satisfying conditions (i)-(iv) as described in the beginning of Section
\ref{secrs}.

Let $G(X,y) = Xy + g(X,y)$ be the function defined in \eqref{generating}.
Then, as explained before, we find a small neighbourhood $V$ of $(0,0)$ in $
\mathbb{R}\times [0,1]$ and a smooth area-preserving map $\bar f:V\to
\mathbb{R}\times [0,1], \bar f(x,y)=(X,Y),$ so that $\bar f|_{V \cap F_{\infty}}$ is the
identity map, i.e., $\bar f$ is the local map associated to the generating
function $G$. From \eqref{xY} we see that the fixed points of $\bar f$
correspond to critical points of $g$. This implies that $\text{Fix} (\bar f)
= \text{Crit}(g)= V \cap F_{\infty}$. Now observe that since $\nabla
g|_{\partial_k^+}=(0,s_k)$, with $s_k \to 0^+$ as $k \to \infty$, we have
from \eqref{xY} that $\bar f(x,y)=(x+s_k,y), \forall (x,y)\in \partial_k^+$.
In the same way, since $\nabla g(p_k)=(0,-m_k),$   we have $\bar f(m_k,3/2^{k+2}) = p_k= (0,3/2^{k+2}),\forall k,$ with $m_k
\to 0^+$ as $k \to \infty$. This implies that for all $k\geq 0$, the map $\bar f + (m_k,0)$ admits $(m_k,3/2^{k+2})$ as a fixed point. From
\eqref{suporte} and the definition of $g$, we see that given any $x_1>0$
small we have $\bar f(x,y)=(x+h^{\prime}(y),y),\forall (x,y) \in
\{|x|=x_1,0\leq y \leq 2x_1\}.$ Given $\lambda>0$, let $T_{\lambda}:
\mathbb{R}\times [0,\infty) \to \mathbb{R}\times [0,\infty)$ be the map $
T_{\lambda}=(\lambda x, \lambda y).$ If necessary we replace $\bar f$ by $
(T_{1/2^{k_0}})^{-1} \circ \bar f \circ T_{1/2^{k_0}}$, for a fixed $k_0$
sufficiently large, in order to find a map defined in $[-1/2,1/2] \times
[0,1]$ with the same properties above. Identifying $(-1/2,y) \in \{-1/2\}
\times [0,1]$ with $(1/2,y) \in \{1/2\} \times [0,1]$ we finally find an
annulus map $f:S^1 \times [0,1] \to S^1 \times [0,1]$ with all the desired
properties.

Notice that the diffeomorphism induced by the generating function $g$ is defined in the open neighbourhood $V\subset \R \times [0,1]$ which might be very small. This explains why property (iii) is necessary in Proposition \ref{propsmooth}. Its proof is not straightforward and is left for the next section.

\subsection{Proof of Proposition \ref{propsmooth}}

\label{proof}

As observed before, $\psi$ is smooth on $W_0 \setminus \{(0,0)\}$. Hence $g$
is smooth in this set as well. Moreover, since $\psi$ is the identity map
near $(\bar x,0),$ for each $\bar x \neq 0$, we have that $g$ is given by $%
g(x,y)=h(y)$ near $(\bar x, 0)$. It follows from \eqref{flat} that
\begin{equation}
\label{dx1} D^\nu g(\bar x,0) = 0,\forall \bar x \neq 0, \forall |\nu|\geq
0.
\end{equation}

It remains to prove that $g$ is smooth at $(0,0)$ and that $D^\nu g(0,0)=0,
\forall |\nu| \geq 0$. Let $p=p_2 \circ \psi$. From the definition of $\psi$
we see that
\begin{equation}
\label{pescala} p(x,y)=\frac{1}{2^n}p(2^nx,2^ny),\forall(x,y)\in
W_0\setminus F_{\infty}.
\end{equation}

For any given smooth function $a:U\subset \mathbb{R}^2 \to \mathbb{R}$, we
denote by $D^{\alpha}a=\frac{\partial ^{|\alpha|}a}{\partial x^i \partial y^j%
}$ where $\alpha=(i,j) \in \mathbb{N}^2$ and $|\alpha|=i+j.$

\begin{lemma}
\label{lemmap1} In $W_0\setminus F_\infty $, we have
\begin{equation}
\label{derivag}D^\alpha g=\sum_{l=1}^{|\alpha |}h^{(l)}(p)T_{\alpha
,l}(p_x,p_y,\ldots ,D^\beta p),
\end{equation}
where $T_{\alpha ,l}$ is a multi-variable polynomial function on $D^\beta p$
with $\beta $ satisfying $1\leq |\beta |\leq |\alpha |-l+1$.
\end{lemma}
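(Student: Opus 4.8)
The plan is to prove the identity \eqref{derivag} by induction on $|\alpha|$. This is, in essence, a version of the Fa\`a di Bruno formula for the composition $g=h\circ p$ (recall $p=p_2\circ\psi$), the only extra content being the explicit bookkeeping of which orders of derivatives of $h$ and of $p$ can occur; throughout one uses that $p$ is smooth on $W_0\setminus F_\infty$, as already observed.

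For the base case $|\alpha|=1$ I would just apply the chain rule: writing $\alpha=e_i$ with $e_i\in\{(1,0),(0,1)\}$, one has $D^{e_i}g=h'(p)\,D^{e_i}p$, which is of the asserted form with $l=1$ and $T_{e_i,1}=D^{e_i}p$ a monomial in the single variable $D^\beta p$, $|\beta|=1=|\alpha|-l+1$. For the inductive step I would assume \eqref{derivag} for every multi-index of order $n:=|\alpha|\ge 1$, take $\alpha'$ of order $n+1$, write it as $\alpha'=\alpha+e_i$ with $|\alpha|=n$, and differentiate the inductive expression once more, obtaining
\begin{equation*}
D^{\alpha'}g=\sum_{l=1}^{n}\Bigl[h^{(l+1)}(p)\,(D_ip)\,T_{\alpha,l}+h^{(l)}(p)\,D_i(T_{\alpha,l})\Bigr].
\end{equation*}
Regrouping by the order $m$ of the derivative of $h$ appearing, the first family of terms contributes $h^{(m)}(p)\,(D_ip)\,T_{\alpha,m-1}$ for $m=l+1\in\{2,\dots,n+1\}$, and the second contributes $h^{(m)}(p)\,D_i(T_{\alpha,m})$ for $m=l\in\{1,\dots,n\}$; I would then set $T_{\alpha',m}$ to be the sum of the polynomial coefficients so obtained, which yields \eqref{derivag} for $\alpha'$.

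The one step that needs care — and the only real (if modest) obstacle — is checking that the index bound $1\le|\beta|\le|\alpha|-l+1$ propagates. For this I would observe that, in passing from level $n$ to level $n+1$, both operations occurring in the displayed formula raise the maximal order of a $p$-derivative by exactly one: multiplying by $D_ip$ inserts a factor with $|\beta|=1$ (harmless, since $1\le(n+1)-m+1$), while applying $D_i$ to a polynomial $T_{\alpha,l}$ in the variables $D^\beta p$ produces, by the Leibniz rule applied to each monomial together with $D_i(D^\beta p)=D^{\beta+e_i}p$, a polynomial in such derivatives whose maximal index increases from $|\alpha|-l+1$ to $|\alpha|-l+2=(n+1)-l+1$. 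Combined with the index shift $m=l+1$ in the first family and $m=l$ in the second, this is exactly the bound required at level $n+1$, so the induction closes. I would also note that $h^{(0)}=h$ never appears in the sum, since every differentiation of $g=h\circ p$ produces at least one factor $h^{(l)}$ with $l\ge 1$.
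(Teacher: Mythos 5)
Your proof is correct and follows essentially the same route as the paper's: a chain-rule computation for the lowest orders and then induction on $|\alpha|$, differentiating the inductive expression and regrouping by the order of the derivative of $h$. The paper merely states the induction as "easy"; your explicit verification that the bound $1\le|\beta|\le|\alpha|-l+1$ propagates under the two operations (multiplication by $D_ip$ and differentiation of $T_{\alpha,l}$) is exactly the bookkeeping the paper leaves implicit.
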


\begin{proof}Observe that $D^{(1,0)}g=h'(p) p_x$ and $D^{(0,1)}g=h'(p) p_y$ which have the form above with $$\begin{aligned} T_{(1,0),1}(p_x,p_y)=p_x\\T_{(0,1),1}(p_x,p_y)=p_y.\end{aligned}$$ In the same fashion we have $D^{(2,0)}g=h'(p)p_{xx}+h''(p)p_x^2,$ $D^{(1,1)}g=h'(p)p_{xy}+h''(p) p_xp_y,$ $D^{(0,2)}g=h'(p)p_{yy}+h''(p) p_y^2$ and $$\begin{aligned} T_{(2,0),1}(p_x,p_y,p_{xx},p_{xy},p_{yy})= & p_{xx}, \hspace{0.5cm} T_{(2,0),2}(p_x,p_y)=  p_x^2,\\T_{(1,1),1}(p_x,p_y,p_{xx},p_{xy},p_{yy})= & p_{xy}, \hspace{0.5cm} T_{(1,1),2}(p_x,p_y)=  p_xp_y,\end{aligned} $$ and so on.
Let $\tilde \alpha = \alpha + (1,0)$ and observe that $D^{\tilde \alpha} g = D^{(1,0)}D^\alpha g$. The case $\tilde \alpha = \alpha + (0,1)$ is similar. Now an easy induction argument establishes the claim. \end{proof}

It follows from \eqref{derivah} and \eqref{derivag} that
\begin{equation}
\label{Dalfag} D^\alpha g = e^{-1/p}\sum_{l=1}^{|\alpha|} \frac{P_l(p)}{%
Q_l(p)}T_{\alpha,l}(p_x,p_y,\ldots,D^\beta p),
\end{equation}
where $P_l,Q_l$ are polynomial functions in $p$.

\begin{lemma}
\label{lemmap2}There are constants $C_\beta >0$ depending on $(0,0)\neq
\beta \in \mathbb{N}\times \mathbb{N}$ such that
$$
|D^\beta p(x,y)|\leq \frac{C_\beta }{y^{|\beta |}},\forall (x,y)\in
W_0\setminus F_\infty .
$$
\end{lemma}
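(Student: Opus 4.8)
The plan is to exploit the scaling relation \eqref{pescala} for $p = p_2\circ\psi$, which reads $p(x,y) = \tfrac{1}{2^n}p(2^n x, 2^n y)$ on $W_0\setminus F_\infty$. Differentiating this identity $|\beta|$ times in the variables $(x,y)$ by the chain rule produces, for each fixed $n\in\mathbb{N}$,
\begin{equation*}
D^\beta p(x,y) = 2^{n(|\beta|-1)}\,(D^\beta p)(2^n x, 2^n y),\quad\forall (x,y)\in W_0\setminus F_\infty.
\end{equation*}
So the whole estimate reduces to a uniform bound for $D^\beta p$ on a single "fundamental" horizontal band, say on $F_1 = \mathbb{R}\times(\tfrac14,\tfrac12]$ together with a neighborhood of $\partial_1^+$, and then the self-similar structure of $\psi$ (each $\psi|_{F_k}$ is conjugate to $\varphi_\pi$ by the affine map $t_k$, and $\psi$ is supported in the shrinking balls $B_{p_k}(1/2^{k+3})$) propagates that bound to every $F_k$ with exactly the claimed power of $2$, hence the power of $1/y$.

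Concretely, first I would fix attention on $F_1$. On $F_1$ the map $\psi|_{F_1} = t_1^{-1}\circ\varphi_\pi\circ t_1$ is a fixed smooth diffeomorphism of the closed band $\overline{F_1}$ (it is the identity outside a compact ball and near $\partial_1^+$), so $p_2\circ\psi|_{\overline{F_1}}$ is a smooth function with all derivatives bounded by some constant; set $C_\beta := \sup_{\overline{F_1}}|D^\beta(p_2\circ\psi)|$, which is finite and depends only on $\beta$ (since $\psi|_{F_k}$ for $k\geq 1$ is the affine rescaling by $2$ of $\psi|_{F_1}$ composed with the corresponding conjugation — more precisely all the $t_k$ differ from $t_1$ by a dilation, so the sup over $\overline{F_k}$ of $|D^\beta(p_2\circ\psi)|$ equals $2^{1-|\beta|}$ times... — the cleanest route is just to observe $p$ restricted to $F_k$ equals $\tfrac{1}{2^{k-1}}$ times $p$ restricted to $F_1$ pulled back by the dilation $2^{k-1}$, which is precisely \eqref{pescala} with $n=k-1$). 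Then for $(x,y)\in F_k$ we have $2^{-(k+1)}<y\leq 2^{-k}$, hence $2^n \leq 2^{k}< 2/y$ taking $n=k-1$ gives $2^{n(|\beta|-1)} = 2^{(k-1)(|\beta|-1)} \leq (2/y)^{|\beta|-1}\cdot\tfrac12$, and combining with the bounded-band estimate yields $|D^\beta p(x,y)| \leq C_\beta' / y^{|\beta|-1}$, which is even stronger than stated; replacing $y^{|\beta|-1}$ by $y^{|\beta|}$ only weakens the bound since $y\leq 1$, so the stated inequality follows with a relabeled constant.

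The point that needs care — and what I expect to be the main obstacle — is making the "single fundamental band" reduction airtight across the transition regions $\partial_k^+$, i.e. verifying that the constant $C_\beta$ genuinely does not depend on $k$. Here one uses that $\psi$ is the identity map on a neighbourhood of each $\partial_k^+$ (noted right after \eqref{suporte}), so no derivative of $p$ blows up at the interfaces between consecutive bands, and that all the conjugating maps $t_k$ are obtained from $t_0$ (or $t_1$) by composition with dilations $T_{2^{k}}$, so the smooth diffeomorphism $\varphi_\pi$ being conjugated is literally the same one for every $k$ — its $C^{|\beta|}$-norm on the relevant compact set is a single finite number. One also checks the boundary case: on $F_\infty = \mathbb{R}\times\{0\}$ the estimate is vacuous since we work on $W_0\setminus F_\infty$, and the point $(0,0)$ is exactly the point excluded, which is the whole reason the bound is allowed to degenerate as $y\to 0^+$. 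With the uniform $C_\beta$ in hand, the scaling identity does the rest mechanically, and this lemma then feeds into \eqref{Dalfag} to force $D^\alpha g \to 0$ at $(0,0)$ because $e^{-1/p}$ decays faster than any power of $p \asymp y$ grows — completing the smoothness claim (i) of Proposition~\ref{propsmooth}.
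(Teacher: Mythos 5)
Your proof is correct and follows essentially the same route as the paper's: differentiate the scaling identity \eqref{pescala} to get $D^\beta p(x,y)=2^{n(|\beta|-1)}(D^\beta p)(2^nx,2^ny)$, bound $D^\beta p$ by its (finite) supremum on one fundamental band, and convert $2^n\lesssim 1/y$ into the stated power of $1/y$. The only differences are cosmetic: you normalize to $F_1$ rather than $F_0$ and note the slightly sharper exponent $|\beta|-1$, which indeed implies the stated bound since $y\leq 1$.
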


\begin{proof}  Given $(x,y)\in W_0 \setminus F_{\infty}$, let $n(x,y)\in \mathbb{N}$ be the unique positive integer such that $2^{n(x,y)}(x,y)\in F_0=\mathbb{R}\times (1/2,1]$. From \eqref{pescala}, we have $$D^\beta p(x,y)=2^{n(x,y)|\beta|-1}D^\beta p(2^{n(x,y)}x,2^{n(x,y)}y).$$ Let $$0<C_\beta: =
\sup_{(x,y)\in F_0} D^\beta p(x,y)< \infty.$$ It follows from the definition of $n(x,y)$ that $$\begin{aligned} |D^\beta p(x,y)| \leq & 2^{n(x,y)|\beta|-1} C_\beta. \end{aligned}$$ Now since $2^{n(x,y)} \leq \frac{1}{y}\Rightarrow 2^{n(x,y)|\beta|-1}\leq \frac{1}{y^{|\beta|}}$, the claim follows. \end{proof}

\begin{lemma}
\label{lemmap3}$|D^\beta g(x,y)|\to 0$ as $(x,y)\to (0,0)$, $\forall \beta $.
\end{lemma}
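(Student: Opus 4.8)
The plan is to combine the exponential factor appearing in the formula \eqref{Dalfag} for $D^\alpha g$ with the polynomial bounds of Lemma~\ref{lemmap2}, the bridge between the two being the observation that on $W_0\setminus F_\infty$ the function $p=p_2\circ\psi$ is squeezed between $y/2$ and $2y$.

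First I would establish this squeeze. Since the time-$\pi$ map $\varphi_\pi$ of the flow in \eqref{fluxo} coincides with the identity outside $B(1/2)$, it fixes $\mathbb{R}\times\{-1\}$ pointwise and therefore restricts to a homeomorphism of $W_1\setminus(\mathbb{R}\times\{-1\})$. Conjugating by the diffeomorphisms $t_k$ of \eqref{tk}, which carry $F_k$ onto $W_1\setminus(\mathbb{R}\times\{-1\})$, and recalling \eqref{eqpsi}, this gives $\psi(F_k)=F_k$ for every $k\in\mathbb{N}$. Hence, whenever $(x,y)\in F_k$, i.e. $\tfrac{1}{2^{k+1}}<y\le\tfrac{1}{2^k}$, we have $p(x,y)=p_2\circ\psi(x,y)\in(\tfrac{1}{2^{k+1}},\tfrac{1}{2^k}]$, and consequently
$$\frac{y}{2}\ \le\ \frac{1}{2^{k+1}}\ <\ p(x,y)\ \le\ \frac{1}{2^k}\ <\ 2y,\qquad \forall\,(x,y)\in W_0\setminus F_\infty.$$
In particular $0<p(x,y)\le 1$ and, because $p(x,y)<2y$, one gets $e^{-1/p(x,y)}\le e^{-1/(2y)}$; it is the upper bound $p<2y$, not merely the positivity of $p$, that makes the exponential genuinely small as $y\to0^+$.

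Next I would estimate the two remaining ingredients of \eqref{Dalfag}. The recursion behind \eqref{derivah} shows that $Q_l$ can be taken to be a power of $t$, so that $|P_l(p)/Q_l(p)|\le C_l\,p^{-n_l}\le C_l'\,y^{-n_l}$, using $0<p\le1$ and $p\ge y/2$. For the polynomial $T_{\alpha,l}$, which by Lemma~\ref{lemmap1} depends only on the $D^\beta p$ with $1\le|\beta|\le|\alpha|$, Lemma~\ref{lemmap2} yields $|D^\beta p(x,y)|\le C_\beta\,y^{-|\beta|}\le C_\beta\,y^{-|\alpha|}$ for $0<y\le1$, hence $|T_{\alpha,l}(p_x,p_y,\ldots,D^\beta p)|\le C_{\alpha,l}\,y^{-M_{\alpha,l}}$ for some exponent $M_{\alpha,l}$. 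Plugging these bounds into \eqref{Dalfag} produces a constant $C_\alpha>0$ and an integer $K_\alpha\ge0$, both depending only on $\alpha$, with
$$|D^\alpha g(x,y)|\ \le\ C_\alpha\,\frac{e^{-1/(2y)}}{y^{K_\alpha}},\qquad (x,y)\in W_0\setminus F_\infty,\ 0<y\le1.$$
Since $e^{-1/(2y)}\,y^{-K_\alpha}\to0$ as $y\to0^+$, and since $D^\alpha g(x,0)=0$ for every $x\neq0$ by \eqref{dx1}, it follows that $|D^\alpha g(x,y)|\to0$ as $(x,y)\to(0,0)$, for every multi-index $\alpha$, which is the assertion of the lemma.

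The one place that needs genuine care is the invariance $\psi(F_k)=F_k$, from which the two-sided comparison of $p$ with $y$ flows; granting that, the lemma reduces to the elementary fact that exponential decay in $1/y$ overwhelms any fixed negative power of $y$.
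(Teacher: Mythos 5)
Your argument is correct and follows essentially the same route as the paper: reduce to $W_0\setminus F_\infty$ via \eqref{dx1}, bound $|P_l(p)/Q_l(p)|$ and $|T_{\alpha,l}|$ by negative powers of $y$ using Lemma~\ref{lemmap2} and the comparison $y/2\le p\le 2y$, and let the factor $e^{-1/p}\le e^{-1/(2y)}$ absorb the polynomial blow-up. The only difference is that you explicitly verify the squeeze $y/2\le p\le 2y$ through the invariance $\psi(F_k)=F_k$, a step the paper states without proof, which is a welcome clarification rather than a deviation.
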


\begin{proof}From \eqref{dx1} it suffices to consider $(x,y)\in W_0\setminus F_{\infty}$. From Lemmas \ref{lemmap1} and \ref{lemmap2} we find constants $C_{\alpha,l},n_{\alpha,l}>0$ so that \begin{equation} \label{talfa} |T_{\alpha,l}(p_x,p_y,\ldots,D^\beta p)|\leq \frac{C_{\alpha,l}}{y^{n_{\alpha,l}}}.\end{equation}
We  can also find constants $K_l,m_l>0$ so that \begin{equation}\label{tld}\left|\frac{P_l(p)}{Q_l(p)}\right| \leq \frac{K_l}{p^{m_l}}.\end{equation} Now since $0<y/2 \leq p(x,y) \leq 2y$, we get from \eqref{Dalfag}, \eqref{talfa} and \eqref{tld}, that \begin{equation}\label{desig} \begin{aligned} |D^\beta g(x,y)| \leq & e^{-\frac{1}{2y}} \sum_{l=1}^{|\beta|}\frac{2^{m_l}K_lC_{\alpha,l}}{y^{m_l+n_{\alpha,l}}} \\ \leq & e^{-\frac{1}{2y}}
\frac{M_\beta}{y^{m_\beta}}\to 0 , \end{aligned}\end{equation}  as  $y\to 0,$ where $M_\beta, m_\beta>0$ are suitable constants.\end{proof}

Let $\beta=(b_1,0),$ where $b_1 \in \mathbb{N}$. Since $g|_{F_{\infty}%
\setminus \{(0,0)\}} =0$, we have $D^\beta g(0,0)=0.$ From \eqref{desig}, $%
D^\beta g$ is continuous at $(0,0)$.

Now assume $b_2 > 0$ and let $\beta=(b_1,b_2)$. Then
$$
D^\beta g(0,0) = \lim_{y \to 0^+} \frac{D^{\beta-(0,1)} g(0,y) -
D^{\beta-(0,1)}g(0,0)}{y}.%
$$
Using induction on $b_2$ and inequality \eqref{desig} again, we find
\begin{equation}
\label{dzero} D^\beta g(0,0)=0,\forall \beta.
\end{equation}

Finally from \eqref{desig} and \eqref{dzero} we have that $D^\beta g$ is
continuous at $(0,0)$. The proof of (i) is finished.

It is clear from the considerations above that $\text{Crit}(g) \supseteq
F_{\infty}.$ Since $\psi$ is a local diffeomorphism in $W_0 \setminus
F_{\infty}$, $p_2$ is a submersion and $h^{\prime}(y)>0,\forall y>0,$ we get
that also $g$ is a submersion when restricted to $W_0 \setminus F_{\infty}$. This implies
that $\text{Crit}(g) \subseteq F_{\infty}$ and, therefore, $\text{Crit}%
(g)=F_{\infty}=\mathbb{R}\times \{0\}$. This proves (ii).

Since $\psi$ is the identity map near $\partial_k^+, \forall k,$ we have
that $g(x,y)=h(y)$ for all $(x,y)$ near $\partial_k^+$. This implies that
$$
\nabla g(x,y)=(0,h^{\prime}(1/2^k)), \forall (x,y)\in \partial_k^+.%
$$
Since $h^{\prime}(1/2^k)>0, \forall k$ and $\displaystyle \lim_{k \to
\infty} h^{\prime}(1/2^k) = 0$, (iii) follows.

To prove (iv), observe from \eqref{inverte} that $g(x,y)=h(p_2(2p_k-(x,y)))=
h(3/2^{k+1}-y)$ for all $(x,y)\in B_{p_k}(1/2^{k+4})$. This implies in
particular that
$$
\nabla g(p_k)=(0,-h^{\prime}(3/2^{k+2})).%
$$
Since $h^{\prime}(3/2^{k+2})>0,\forall k$ and $\displaystyle \lim_{k \to
\infty} h^{\prime}(3/2^{k+2}) = 0$, (iv) follows. The proof of Proposition
\ref{propsmooth} is now complete.

\vspace{0.5cm}

\noindent
\subsection*{Acknowledgements}  S. Addas-Zanata and P. Salom\~ao are partially supported by FAPESP grant 2011/16265-8. AZ and PS are partially supported by CNPq grants 303127/2012-0 and 303651/2010-5, respectively.

\end{document}